\documentclass[11pt]{amsart}
\usepackage[utf8]{inputenc}
\usepackage{amsmath}
\usepackage{amsfonts}
\usepackage{amssymb}
\usepackage{mathrsfs}
\usepackage{graphicx}
\usepackage{bm}
\usepackage{mathtools}
\usepackage{bbm}
\usepackage{stmaryrd}
\usepackage{graphicx}
\usepackage{float}
\usepackage{yhmath}

\usepackage[left=3.2cm,right=3.2cm, top=3.1cm,bottom=3.1cm]{geometry}
\makeatletter
\renewcommand{\@secnumfont}{\bfseries}
\makeatother

\makeatletter

\renewcommand\subsubsection{\@secnumfont}{\bfseries}%
\renewcommand\subsubsection{\@startsection{subsubsection}{3}
  \z@{.5\linespacing\@plus.7\linespacing}{-.5em}%
  {\normalfont\bfseries}}
  
  \makeatother

\usepackage[colorlinks=true, linkcolor=cyan!20!blue, citecolor=purple, urlcolor=purple]{hyperref}

\usepackage[capitalise]{cleveref}
\usepackage{longtable}

\usepackage{aliascnt}
\usepackage{xparse}

\let\oldnewtheorem\newtheorem
\RenewDocumentCommand{\newtheorem}{s m o m O{}}{%
  \IfBooleanTF{#1}% starred form
    {\oldnewtheorem{#2}{#4}}%
    {%
      \IfNoValueTF{#3}%
        {\oldnewtheorem{#2}{#4}[#5]}%
        {%
          \newaliascnt{#2}{#3}%
          \oldnewtheorem{#2}[#2]{#4}%
          \aliascntresetthe{#2}%
        }%
    }%
}
\usepackage{tikz-cd} 
\usepackage[all,cmtip]{xy}	
\xyoption{arrow}
\usepackage{dynkin-diagrams}

\usepackage{xcolor,etoolbox}
\usepackage{enumitem}
\usepackage{adjustbox}

\usepackage[textsize=footnotesize, textwidth=25mm, color=green!40]{todonotes}

\usepackage{array}
\usepackage{multirow}

\theoremstyle{plain}

\newtheorem{theorem}[subsubsection]{Theorem}
\newtheorem{thm}[subsubsection]{Theorem}

\Crefname{theorem}{theorem}{theorems}
\Crefname{theorem}{Theorem}{Theorems}

\newtheorem{prop}[theorem]{Proposition}
\newtheorem{cor}[theorem]{Corollary}

\theoremstyle{definition}
\newtheorem{defi}[theorem]{Definition}
\newcommand{\pp}{\medskip} 

\newtheorem{rmk}[theorem]{Remark}
\newtheorem{eg}[theorem]{Example}
\newtheorem{ex}[theorem]{Example}

\newcommand{\ZZ}{\mathbb{Z}}
\newcommand{\QQ}{\mathbb{Q}}

\newcommand{\NN}{\mathbb{N}}
\newcommand{\CC}{\mathbb{C}}
\newcommand{\PP}{\mathbb{P}}
\newcommand{\VV}{\mathbb{V}}

\newcommand{\DD}{\mathbb{D}}

\newcommand{\Aa}{\mathsf{A}}

\newcommand{\Cl}[1]{\CC(\!({#1})\!)}

\newcommand{\Cc}{\mathcal{C}}
\newcommand{\Pc}{\mathcal{P}}
\newcommand{\Hc}{\mathcal{H}}
\renewcommand{\Mc}{\mathcal{M}}
\newcommand{\Vc}{\mathcal{V}}

\newcommand{\Oc}{\mathscr{O}}
\newcommand{\Oo}{\mathcal{O}}
\newcommand{\Lc}{\mathcal{L}}

\newcommand{\Ak}{\mathfrak{A}}
\newcommand{\Ck}{\mathfrak{C}}
\newcommand{\Ic}{\mathcal{I}}
\newcommand{\Lk}{\mathfrak{L}}

\usepackage{oldgerm}

\newcommand{\Ac}{\mathcal{A}}
\newcommand{\Ls}{\mathsf{L}}

\newcommand{\Hm}{\mathrm{H}}
\newcommand{\Spec}{\mathrm{Spec}}
\newcommand{\End}{\mathrm{End}}
\newcommand{\Hom}{\mathrm{Hom}}

\newcommand{\Res}{\mathrm{Res}}
\newcommand{\vac}{\bm{1}}
\newcommand{\conf}{\bm{\omega}}
\newcommand{\id}{\mathrm{id}}

\newcommand{\g}{\mathfrak{g}}
\newcommand{\gh}{\widehat{\g}}
\newcommand{\Gm}{\mathbb{G}_m}
\newcommand{\Vir}{\mathrm{Vir}}
\newcommand{\cc}{\bm{c}}

\newcommand{\Ct}{\widetilde{C}}

\newcommand{\cbotimes}{\underset{\VV}{\otimes}}

\newcommand{\Vmod}{V\mathsf{\!-mod}}

\usepackage{scalerel}
\usepackage{stackengine}
\stackMath

\newcommand{\simeqaccent}[1]{%
  \stackengine{0ex}{#1}{%
    \smash{\ensurestackMath{%
      \stackengine{0.2ex}{\overline{\phantom{#1}}}{\widetilde{\phantom{#1}}}{O}{c}{F}{F}{L}%
    }}%
  }{O}{c}{F}{F}{L}%
}

\newcommand{\triangleaccent}[1]{%
  \stackengine{0ex}{#1}{%
    \smash{\ensurestackMath{%
      \stackengine{0.2ex}{\overline{\phantom{#1}}}{\widehat{\phantom{#1}}}{O}{c}{F}{F}{L}%
    }}%
  }{O}{c}{F}{F}{L}%
}

\newcommand{\bTMgn}{\simeqaccent{{\Mc}}_{g,n}}
\newcommand{\btMgn}{\triangleaccent{{\Mc}}_{g,n}}
\newcommand{\bMgn}{{\overline{\Mc}}_{g,n}}

\newcommand{\bM}{\overline{\Mc}}

\newcommand{\TMgn}{\widetilde{{\Mc}}_{g,n}}

\newcommand{\Mgn}{{{\Mc}}_{g,n}}

\newcommand{\TDelta}{\widetilde{\Delta}}

\makeatletter
\newcommand{\ostar}{\mathbin{\mathpalette\make@circled\star}}

\newcommand{\make@circled}[2]{%
  \ooalign{$\m@th#1\smallbigcirc{#1}$\cr\hidewidth$\m@th#1#2$\hidewidth\cr}%
}

\newcommand{\smallbigcirc}[1]{%
  \vcenter{\hbox{\scalebox{0.7}{$\m@th#1\bigcirc$}}}%
}
\makeatother

\newcommand{\Gc}{\mathcal{G}}
\newcommand{\Tc}{\mathcal{T}}

\newcommand{\Ec}{\mathcal{E}}

\newcommand{\Kc}{\mathcal{K}}

\newcommand{\Db}{\mathbb{D}}

\newcommand{\Ho}{\mathrm{H}}

\newcommand{\Lie}{\mathrm{Lie}}
\newcommand{\Pic}{\mathrm{Pic}}

\newcommand{\Bun}{\mathrm{Bun}}

\newcommand{\Gr}{\mathrm{Gr}}

\newcommand{\SL}{\mathrm{SL}}

\newcommand{\GL}{\mathrm{GL}}

\newcommand{\h}{\mathfrak{h}}

\usepackage[justification=centering]{caption}

\begin{document}

\title[Conformal blocks in algebraic geometry]{Conformal blocks in algebraic geometry}
\author[Chiara Damiolini]{Chiara Damiolini}
\address{Department of Mathematics, University of Texas at Austin, Austin, TX} 
\email{chiara.damiolini@austin.utexas.edu}
\subjclass[2010]{14H60, 17B69, 14D20, 17B67, 81R10, 20G35} 
\thanks{This work is partially supported by NSF DMS 2401420}

\begin{abstract} These notes survey the theory of (twisted) conformal blocks from an algebro–geometric perspective and have two main goals. The first one is to summarize the construction of conformal blocks from vertex operator algebras, and to describe their fundamental properties---such as factorization and sewing---which imply that conformal blocks define vector bundles on moduli spaces of curves whose ranks and Chern classes can be computed explicitly. The second aim is to describe how line bundles on moduli of principal bundles over a curve---and their sections---can be understood via (twisted) conformal blocks for (twisted) affine Lie algebras.     
\end{abstract}

\maketitle

\section{Introduction}

\emph{Conformal blocks} originate in two–dimensional conformal field theory, where they arise as spaces of chiral correlation functions associated with a Riemann surface equipped with marked points and representation–theoretic data attached to them. Given their nature, conformal blocks naturally sit at the intersection of representation theory, algebraic geometry, and mathematical physics. In particular, they provide a powerful bridge between the representation theory of infinite–dimensional algebras---such as affine Lie algebras and vertex operator algebras---and the geometry of moduli spaces of curves and principal bundles.

\pp From a mathematical perspective, the space of conformal blocks is best thought of as a vector space associated to a \emph{decorated curve}: a smooth (or stable) projective curve together with a finite set of marked points, each labeled by a suitable representation. Depending on the context, these representations may come from integrable highest–weight representations of affine Lie algebras or from modules over a vertex operator algebra (VOA). Although the decorations are local in nature, conformal blocks depend globally on the curve. When the curve varies in families these spaces assemble into sheaves---and in favorable cases vector bundles---over moduli spaces of curves. Under appropriate assumptions, these sheaves come equipped with rich additional structures. For instance, they admit a projectively flat connection and they satisfy certain compatibilities under pullback along tautological maps between moduli of curves. As an application, one can compute their ranks and Chern classes explicitly. These properties also have interesting consequences for the structure of the category of representations: via conformal blocks, it inherits a monoidal structure.

\pp Among the driving forces behind the mathematical development of the  subject were Verlinde's conjecture \cite{verlinde:1988} on a formula  for the dimensions of spaces of conformal blocks in terms of modular  data, and the work of Moore and Seiberg \cite{moore.seiberg:1989},  who identified the consistency conditions of rational conformal field  theories with the axioms of what mathematicians now call a \emph{modular functor}.  
Building on earlier work in genus zero, the first systematic algebro-geometric construction of conformal blocks on higher-genus curves was given by Tsuchiya, Ueno, and Yamada \cite{tuy}. Their approach defines conformal blocks as duals of spaces of coinvariants, obtained by quotienting tensor products of representations of affine Lie algebras by a Lie algebra encoding global functions on the punctured curve. {Vertex operator algebras} provide a far–reaching generalization of this picture. They simultaneously extend the notions of Lie algebras and associative algebras, while retaining enough structure to encode the operator product expansion fundamental to conformal field theory. Conformal blocks associated with VOA modules---constructed in \cite{NT,bzf,frenkelSzczesny:twusted,DGT1, DGT2}---recover those arising from affine Lie algebras as a special case, but also encompass a much broader range of examples, including those arising from lattice VOAs, Virasoro VOAs, and logarithmic theories.  In this setting, conformal blocks are again defined as dual of coinvariants, now with respect to a global Lie algebra attached to the VOA, which is much harder to control. Given the increased complexity, many of the key geometric properties mentioned above hold only under additional assumptions or in a modified form, and many questions concerning their behavior remain open.

\pp A central geometric aspect of conformal blocks from affine Lie algebras is their intimate relationship with the moduli stack $\Bun_G$ of principal $G$-bundles on a curve, where $G$ is a simple, simply connected complex algebraic group. For affine Lie algebras at positive integral level, the associated space of conformal blocks on a smooth projective curve can be canonically identified with spaces of global sections of certain line bundles on $\Bun_G$ \cite{beauville.laszlo:1994:conformal,beauville:96:verlinde,laszlo.sorger:1997,sorger:96:verlinde, sorger:99:moduli}. This perspective provides a geometric realization of the Verlinde formula, which computes the dimension of conformal blocks in terms of representation–theoretic data. This description is also extended to conformal blocks arising from \textit{twisted affine Lie algebras}---called \textit{twisted conformal blocks}---recovering sections of line bundles on the moduli stack $\Bun_\Gc$ of principal $\Gc$-bundles for a \textit{parahoric Bruhat--Tits} group $\Gc$ \cite{damiolini:2020:conformal,hongkumar:2023,damiolini.hong.gao:2025}. It is unknown whether, starting with representations of VOAs, the associated conformal blocks have a cohomological interpretation of a similar nature.

\pp The goal of this note is to provide a coherent narrative concerning the bulk of the theory of conformal blocks from an algebro-geometric perspective. Given my expertise, I focus on two main topics: properties of conformal blocks from VOAs, and the relation between twisted conformal blocks and parahoric Bruhat--Tits groups. When possible, I included references to research directions that I could not expand on.

\subsection*{Summary of the paper} In \cref{sec:CB} we introduce the definitions of conformal blocks as duals of spaces of coinvariants. In \cref{sec:CBfromLieAlgebras} we start with the case of affine Lie algebras, while in \cref{sec:CBfromVOA} we explain their generalization to VOAs. Twisted conformal blocks are discussed in \cref{sec:twCB}.

\Cref{sec:properties} is devoted to the geometric properties of conformal blocks. In particular, in \cref{sec:cor} we collect several consequences of the properties described in \crefrange{sec:sheaf}{sec:projconn}, including local freeness, rank formulas, Chern class computations, and the monoidal structure of $\Vmod$. \Cref{sec:twCBprop} summarizes the corresponding results in the twisted setting.

Finally, \cref{sec:BunG} explains the relationship between (twisted) conformal blocks and moduli of principal bundles. \Cref{sec:PBT} reviews parahoric Bruhat--Tits group schemes, while \cref{sec:PicBunG,sec:tCBforBunG} describe line bundles on the moduli stack of principal bundles and their global sections using twisted conformal blocks.

\subsection*{Acknowledgment} These notes stem from the lecture given at the Bootcamp for the 2025 Algebraic Geometry Summer Research Institute which took place at Colorado State University in July 2025. I am indebted to the Bootcamp organizers (Izzet Coşkun, Emanuele Macrì, Alexander Perry, Kevin Tucker and Isabel Vogt) for giving me the opportunity to be a mentor at this event.

I also want to thank the students and post-docs who attended my workshop: Avik Chakravarty (University of Pennsylvania), Nathaniel Collins (Colorado State University), Connor Halleck-Dube (UC Berkeley), Crislaine Kuster (IMPA), Davide Gori (Sapienza University of Rome), Hank Morris (University of Illinois at Chicago), Lillian McPherson (UC San Diego), Shuo Gao (Stony Brook University), Sixuan Lou (University of Illinois at Chicago).

Finally, I am grateful to my colleagues and collaborators. In particular, I thank David Ben-Zvi, Angela Gibney, Jiuzu Hong and Lukas Woike for their valuable mathematical insights,  and  the anonymous referee for their careful reading and many helpful suggestions.

\section{Conformal blocks}\label{sec:CB} A good way to understand the nature of conformal blocks is to recall their construction given by Tsuchiya, Ueno and Yamada \cite{tuy} which is formulated using representation theory of affine Lie algebras. In \cref{sec:CBfromLieAlgebras} we will present this special case and then, in \cref{sec:CBfromVOA}, we consider conformal blocks from representations of vertex operator algebras. Finally, in \cref{sec:twCB} we consider \textit{twisted} conformal blocks.

\subsection{Conformal blocks from Lie Algebras} \label{sec:CBfromLieAlgebras} We describe in this section conformal blocks arising from affine Lie algebras. We refer the reader to \cite{beauville:96:verlinde,sorger:96:verlinde,looijenga,kumar:book,belkale.fakhruddin}.
 Let $\g$ be a simple Lie algebra and fix a positive integer $\ell$. Then we consider the affine Lie algebra
\[ \gh \coloneqq \g \otimes \Cl{t} \oplus \CC \mathsf{K},
\] where $\mathsf{K}$ is a formal variable and with Lie bracket given by
\[ [Xf(t) + \alpha, Yg(t) + \beta] = [X,Y] f(t)g(t) + (X|Y) \Res(g(t)df(t)) \mathsf{K},
\] where the equality $2 \check{h}(\,|\,)=\kappa $ holds, for $\kappa$ the Killing form of $\g$ and $\check{h}$ the dual coxeter number of $\g$. The set of representations of $\gh$ we consider here, denoted $P^+_\ell$ is the set of highest weight irreducible representations of $\gh$ on which $\mathsf{K}$ acts by multiplication by $\ell$ and which are integrable (see \cite{kac:1990:infinite} for details). These representations are combinatorially encoded by affine weights $\Lambda$ of central charge $\ell$ (see \cref{ex:Lambda}). Following Kac \cite{kac:1990:infinite}, $P^+_\ell$ is in bijection with the set of finite dimensional irreducible representations of $\g$ of \textit{level} at most $\ell$ (that is those in which if $X \in \g$ is nilpotent, then $X^{\ell +1}$ acts trivially). Combinatorially, this is the set of dominant weights of $\g$ that, when paired with the coroot associated with the highest root of $\g$, yield an integer smaller than or equal to $\ell$. We observe that $P^+_\ell$ is closed under taking duals and throughout we will use the following notation
\[\xymatrix@R=1mm@C=5mm{
    & \text{weight} &  \text{representation of $\g$} &  \text{representation of $\gh$ of level $\ell$} \\
     \ar@(l,l)[d]^{\text{\small \qquad  dual}} & \lambda  & V_\lambda & L_{\ell,\lambda}(\g)=\Hc_{\lambda,\ell}=\bigoplus_{i \in \NN} \Hc_{\lambda,\ell}(i)  \\
     &\lambda^* & V_{\lambda^*}=V_\lambda^\vee & L_{\ell,\lambda}(\g)'=\Hc_{\lambda,\ell}'=\bigoplus_{i \in \NN} (\Hc_{\lambda,\ell}(i)^\vee)
}\]

\pp Let $C$ be a projective curve over $\CC$ which has at worst nodal singularities. Let $P_1, \dots, P_n$ be a collection of distinct and smooth points of $C$. We further assume that every irreducible component of $C$ is marked by at least one point $P_i$ or, equivalently, that $C \setminus P_\bullet$ is affine. For every $i$, by choosing local coordinates $t_i$ at $P_i$, we can interpret $\gh_i \coloneqq \g \otimes \Cl{t_i} \oplus \CC$ as a Lie algebra over the formal disk $\DD^\times_i$ around the point $P_i$.  Consider the Lie algebra $\g_{C\setminus P_\bullet} \coloneqq \g \otimes \Hm^0(C\setminus P_\bullet,\Oc_C)$ with Lie bracket $[X f, Yg] = [X,Y]fg$. In view of the residue theorem, the map \[\g_{C\setminus P_\bullet} \to \bigoplus_{i=1}^n \gh_i, \qquad X \otimes f \mapsto (X \otimes f_i)_{i=1}^n,
\] is a Lie algebra homomorphism. Therefore, the Lie algebra $\g_{C\setminus P_\bullet}$ acts on every representation of $\bigoplus_{i=1}^n \gh_i$.

\begin{defi}\cite{tuy} \label{def:CBtuy} The \emph{space of coinvariants} associated with $(C,P_1,\dots, P_n)$ and decorated by $\lambda_1, \dots, \lambda_n \in P^+_\ell$ is the quotient
\begin{equation*} \VV(C;P_\bullet;(\lambda_\bullet; \ell)) \coloneqq \dfrac{\Hc_{\lambda_1, \ell} \otimes \dots \otimes \Hc_{\lambda_n, \ell}}{\g_{C\setminus P_\bullet} \left(\Hc_{\lambda_1, \ell} \otimes \dots \otimes \Hc_{\lambda_n, \ell}\right)} = \dfrac{\Hc_{\lambda_\bullet,\ell}}{\g_{C\setminus P_\bullet} \Hc_{\lambda_\bullet,\ell}}.
\end{equation*} 
Similarly, we define the space of \emph{conformal blocks} $\VV(C;P_\bullet;\lambda_\bullet)^\dagger$ as the linear dual of the space of coinvariants, that is
\begin{equation*} \VV(C;P_\bullet;(\lambda_\bullet;\ell))^\dagger \coloneqq \VV(C;P_\bullet;(\lambda_\bullet;\ell))^\vee =\Hom_{\g_{C \setminus P_\bullet}}(\Hc_{\lambda_1,\ell} \otimes \dots \otimes \Hc_{\lambda_n,\ell}, \CC),
\end{equation*}  where $\CC$ is the trivial representation of $\g_{C \setminus P_\bullet}$.\end{defi} 

\pp We observe that, since the action of $\g_{C\setminus P_\bullet}$ on $\Hc_{\lambda_\bullet,\ell}$ depends on a choice of coordinates $t_i$ at the points $P_i$, then also the notion of conformal blocks depends on this datum. However Tsuchimoto shows that spaces of coinvariants and conformal blocks are independent of the choice of coordinates \cite{tsuchimoto} (see also \cite{bzf,DGT1,DGT2}).

\begin{ex} \label{ex:CBonP1} When $C=\PP^1$, one can show that 
$\VV(\PP^1; P_\bullet; (\lambda_\bullet;\ell))$ is naturally isomorphic to a quotient of $V_{\lambda_1} \otimes \dots \otimes V_{\lambda_n}$ by the action of $\g$ and by an operator depending on $\ell$ \cite{beauville:96:verlinde}. This provides a simple proof of finite dimensionality of spaces of coinvariants over $\PP^1$. As a consequence, one obtains
\[ \VV(\PP^1; P; (\lambda;\ell))= \left\lbrace \begin{array}{ll}
     \CC& \text{ if } \lambda =0 \\ 
     0 &  \text{ otherwise}
\end{array} \right.\] \and \[
\VV(\PP^1; P_1, P_2; (\lambda, \mu; \ell))= \left\lbrace \begin{array}{ll}
     \CC& \text{ if } \lambda = \mu^*  \\
     0 &  \text{ otherwise}
\end{array} \right. .
\]
\end{ex}

\subsection{Conformal blocks from VOAs} \label{sec:CBfromVOA} We describe in this section conformal blocks from vertex operator algebras (VOAs). We refer the interested reader to  \cite{lepli,bzf,DGT1,DGT2} and references therein for the precise definitions of VOAs and of their modules, while we give here a more intuitive description. 

\pp \label{pp:VOA}  Loosely speaking, a {VOA} $V$ is a simultaneous generalization of both a differential commutative algebra and of a Lie algebra. This is achieved by associating to every element $\bm{a} \in V$ a family $\left(\bm{a}_{(n)}\right)_{n\in\ZZ}$ of endomorphisms of $V$ which are to satisfy appropriate conditions, resembling associativity and commutativity.  More precisely, a VOA is a four-tuple $V=(V,Y, \vac, \conf)$ where $V= \bigoplus_{i\in \NN} V(i)$ is a graded  vector space and \[Y=Y( - , z) \colon V \to \End(V)[\![ z, z^{-1}]\!], \qquad \bm{a} \mapsto Y(a,z)\coloneqq \bm{a}(z)=\sum \bm{a}_{(n)}z^{-n-1}\] encodes the data of the endomorphisms $(\bm{a}_{(n)})_{n\in\ZZ}$. The elements $\vac \in V(0)$ and $\conf \in V_2$ are called the \textit{vacuum vector}---which plays a role analogous to that of a lowest weight vector or of an identity element---and the \textit{conformal vector}---which induces an action of the Virasoro Lie algebra $\Vir$ on $V$---respectively. The central charge\footnote{The Virasoro Lie algebra $\Vir$ is spanned by elements $L_p$, for $p \in \ZZ$ and by a central element $\mathsf{C}$ with Lie bracket \[ [L_p, L_q] = (p-q) L_{p+q} + \mathsf{C} \dfrac{p^3-p}{12}  \delta_{p+q,0}.\] An action of $\Vir$ has central charge equal to a constant $\bm{c}$ if $\mathsf{C}$ acts by multiplication by $\bm{c}$. Via the projection $L_p \mapsto -t^{p+1}\partial_t$, it is a central extension of the Witt Lie algebra $\Cl{t}\partial_t$.} of this action is denoted $c_V$ and called the central charge of $V$.  Throughout this note we will always assume that VOAs are of CFT-type and $C_1$-cofinite. These conditions imply in particular that $V(0)$ is spanned by $\vac$ and that the homogeneous vector spaces $V(i)$ are finite dimensional.

\pp \label{sec:VMod} Now that we have given a sense of what VOAs are, the set of representations associated with $V$ consists of \textit{admissible $V$-modules}, which we will call $V$-modules for simplicity. {Modules} over $V$ are, roughly speaking, $\NN$-graded vector spaces $M=\bigoplus_{i \in \NN} M(i)$ which are equipped with a map \[Y^M(\,,z) \colon V \to \End(M)[\![z,z^{-1}]\!], \qquad \bm{a} \mapsto \bm{a}^M(z) :=\sum_{n\in\ZZ} \bm{a}^M_{(n)}z^{-n-1}\] which ought to satisfy properties similar to those of $Y$. In particular $M$ has an induced action of the Lie algebra 
\[\Lk(V)=\dfrac{V \otimes \Cl{t}}{\nabla} \text{\qquad  where \qquad }\nabla =\omega_{(0)} \otimes \mathrm{Id}_{\CC(\!(t)\!)} + \mathrm{Id}_V \otimes \frac{d}{dt}.\]
In this way, it will be harmless for the reader who is not familiar with VOAs to think of a $V$-module as  representations for an infinite dimensional Lie algebra which is however not of the type $\g \otimes \Cl{t}$ in general.  
The admissibility condition that we assume here imposes that the homogeneous vector spaces $M(i)$ are finite dimensional. In particular one notes that $V$ is itself a $V$-module and that to every $V$-module $M$ one can associate a natural structure of $V$-module on $M' \coloneqq \bigoplus_{i \in \NN} (M{(i)})^\vee$ which is called the  \textit{contragredient} module of $M$.

\pp When the category of $V$-modules is semisimple and has finitely many simple objects, then we say that $V$ is \textit{rational}. A ``weaker'' condition is encoded in a property called $C_2$-cofiniteness\footnote{It is conjectured, but not known if rationality implies $C_2$-cofiniteness, so this is why ``weaker'' is in quotation marks.}, which requires that the space generated by $\bm{a}_{(-2)}(\bm{b})$ has finite codimension in $V$. This condition guarantees that $V$ has finitely many simple objects. When a VOA $V$ is $C_2$-cofinite but not rational, then the category of $V$-module has finitely many simple objects, but it is not semisimple (i.e. there exist indecomposable modules which are not simple). We call such VOAs \textit{logarithmic}. If $V$ is $C_2$-cofinite, rational and it is self-contragredient, i.e. $V=V'$, then $V$ is called \textit{strongly rational}.

\begin{ex} \label{ex:Vlg} Starting with a simple Lie algebra $\g$, one can show that  the representation $L_{\ell}(\g)\coloneqq L_{\ell,0}(\g)$ from \ref{sec:CBfromLieAlgebras} is actually a rational and $C_2$-cofinite VOA and that $L_{\ell,\lambda}(\g)$ are all its simple modules. In the VOA literature $L_\ell(\g)$ is called the \textit{simple affine VOA of level $\ell$}. 

In fact $L_{\ell}(\g)$ is the simple quotient of the universal Verma module $V_\ell(\g)$ for $\gh$, which is itself a VOA (and which can be constructed for every complex number $\ell \neq - \check{h}$). Although it is neither rational nor $C_2$-cofinite, $V_\ell(\g)$ is easier to describe than $L_\ell(\g)$. In fact, one has \[V_\ell(\g) = U(\gh) / (\g \otimes t\CC[\![t]\!], \mathsf{K} = \ell) \cong U(\g \otimes t^{-1}\CC[t^{-1}]),\] where $U(-)$ denotes the universal enveloping algebra and where the last isomorphism is an isomorphism of vector spaces. It follows that every element of $V_\ell(\g)$ is a linear combination of elements of the form $X_1 t^{r_1} \circ \dots \circ X_s t^{r_s}$, for $X_i \in \g$ and $r_i \leq -1$, and for $s \in \NN$. The element $1$ is the vacuum element of $V_\ell(\g)$ and one can show that the conformal vector $\conf$ is given by the element 
\[ \dfrac{1}{2(\check{h}+\ell)}\sum_{\alpha} X_\alpha t^{-1} \circ X_\alpha t^{-1},
\] where $\{X_\alpha\}$ is an orthonormal basis of $\g$ with respect to the form $(\,|\,)$. One also has
\[ (X t^{-1})(z) = \sum_{n \in \ZZ} (Xt^n\circ -) z^{-n-1},
\] where the endomorphism $(Xt^n\circ -)$ is left multiplication by the element $Xt^n$. The fields associated with the other elements of $V_\ell(\g)$ can be similarly described.  

A similar construction to $V_\ell(\g)$, replacing $\g$ with an abelian Lie algebra $\h$ of dimension $r$ yields the Heisenberg VOA of rank $r$.
\end{ex}

\pp Consider now a pointed curve $(C, P_1, \dots, P_n)$ decorated with $V$-modules $M_1, \dots, M_n$. We explain briefly how to associate conformal blocks to this data. As in the previous section, we first require that each irreducible component of $C$ is marked by at least one point $P_i$ and we also choose local coordinates $t_i$ at every $P_i$. 

For the VOA $L_\ell(\g)$, one could easily define the Lie algebra $\g_{C \setminus P_\bullet}$ which encoded both the geometry of $(C;P_\bullet)$ and the structure of $\gh$-modules of each decoration. For a general VOA $V$, however, the construction of such a Lie algebra is much more subtle and requires constructing a \textit{global version} of the Lie algebra $\Lk(V)$, which we can interpret as playing an analogous role to that of $\gh$. We postpone the construction of this Lie algebra, denoted $\Lc_{C\setminus P_\bullet}(V)$, to \cref{rmk:LCPVconstruction} \ref{it:LCPV1} and following \cite{DGT2} we call it the \textit{chiral Lie algebra} of $V$ over $C$.

\begin{defi}\label{def:CB}
    The \emph{space of coinvariants} associated with $(C;P_\bullet; M_\bullet)$ is
    \[ \VV(C;P_\bullet; M_\bullet) \coloneqq \dfrac{M_1 \otimes \cdots \otimes M_n}{\Lc_{C\setminus P_\bullet}(V) \left(M_1 \otimes \cdots \otimes M_n\right)} = \dfrac{M_\bullet}{\Lc_{C\setminus P_\bullet}(V)  M_\bullet}.
    \] The space of \emph{conformal blocks} $\VV(C;P_\bullet; M_\bullet)^\dagger$ is the linear dual of $\VV(C;P_\bullet; M_\bullet)$.
\end{defi}

\begin{rmk} \label{rmk:LCPVconstruction} \begin{enumerate}[label=(\alph*)] \item \label{it:LCPV1} The construction of $\Lc_{C\setminus P_\bullet}(V)$ is achieved by first constructing a global version of $V$ over $C$, which we denote $\Vc_C$. The action of the Virasoro Lie algebra on $V$ induces a connection $\nabla \colon \Vc_C \to \Vc_C \otimes \omega_C$ and so one may define
\begin{equation} \label{eq:LCPV} \Lc_{C\setminus P_\bullet}(V) \coloneqq \Ho^0\left(C\setminus P_\bullet, \dfrac{\Vc_C \otimes \omega_C}{\operatorname{im} \nabla}\right).\end{equation}
This definition has been given in \cite{bzf} for smooth curves and extended to also nodal curves in \cite{DGT1,DGT2}. For rational curves \cite{NT} consider a different Lie algebra which however gives rise to the same spaces of coinvariants \cite{DGT2}. 
\item \label{it:LCPV2}  When $V=L_\ell(\g)$ the Lie algebra $\Lc_{C\setminus P_\bullet}(V)$ and $\g_{C\setminus P_\bullet}$ are not isomorphic but, as shown in \cite{bzf}, they define the same spaces of coinvariants. This is due to the fact that $L_\ell(\g)$ is generated by its degree 1 which is naturally in bijection with $\g$. 
\end{enumerate}
\end{rmk}

\subsection{Twisted conformal blocks} \label{sec:twCB} Starting with a VOA $V$, one might wish to extend the notion of conformal blocks to a wider class of $V$-modules. The case that we discuss here, and which will be relevant in \cref{sec:tCBforBunG}, concerns the case of \emph{twisted} $V$-modules \cite{Dong:twisted}.

\pp Assume that $V$ is equipped with an automorphism $\sigma$ of finite order, let's say $m$. To this datum, one can define the concept of $\sigma$-twisted $V$-modules.  A $\sigma$-twisted $V$-module is a vector space $M$ equipped with a map
\[ Y^{M}(\, , z) \colon V \to \End(M)[\![ z^{1/m}, z^{-1/m}]\!], \qquad \bm{a} \mapsto \bm{a}^{M}(z)
\] satisfying appropriate conditions---similar to those satisfied by untwisted $V$-modules---and in particular such that, whenever $\sigma(\bm{v}) = e^{\frac{2\pi i s}{m}}\bm{v}$, then $\bm{v}^{M}_{(n)}=0$ unless $n \in s/m + \ZZ$. It follows that, if a $V$-module has an action of the Lie algebra $\Lk(V)$, then a $\sigma$-twisted $V$-module has an action of the Lie algebra
\[\Lk^\sigma\!(V) = \dfrac{(V \otimes \Cl{t^{1/m}})^{\sigma}}{\nabla}.\]

\begin{eg}
    Let $V=L_{\ell}(\g)$ and let $\sigma$ be an automorphism of $\g$ of order $m$. Let $\zeta_m$ be a primitive $m$-th root of unity, and extend $\sigma$ to $\gh$ by sending $t$ to $\zeta_m^{-1}t$. This automorphism induces an automorphism of the VOA $L_\ell(\g)$ which we still denote $\sigma$. In the same spirit of \cref{ex:Vlg}, we can interpret $\sigma$-twisted $L_\ell(\g)$-modules as modules for the Lie algebra $\gh^\sigma$ obtained taking the $\sigma$-invariant elements of $\gh$. 
    As in the untwisted setting, also in this case the simple modules for $L_\ell(\g)$ can be given a combinatorial description in terms of affine weights $\Lambda$s of the \textit{twisted} affine Lie algebra $\gh^\sigma$ (see \cref{ex:Lambda}). Moreover, following \cite{kac:1990:infinite}, one can actually reduce the study of $\sigma$-twisted $L_\ell(\g)$-modules to the case in which $\sigma$ acts on $\g$ by diagram automorphisms only, so that only finitely many cases can be analyzed. 
\end{eg}

\pp The geometric counterpart of twisted modules---necessary to define twisted conformal blocks---does not involve pointed curves, but rather covers of such curves. Consider a $\Gamma$-cover of smooth and projective curves $\pi \colon X \to C$, where $\Gamma$ is a finite group.
Assume furthermore that $\Gamma$ acts on $V$ as a group of VOA automorphisms. 
Let $P_\bullet$ be a non-empty and finite set of points of $C$ and, for every $i$, choose $x_i \in \pi^{-1}P_i$ and denote $\Gamma_i$ the stabilizer of $x_i$, which is necessarily a cyclic group and acts on $V$. Let $M_i$ be a $\Gamma_i$-twisted $V$-module. In order to define conformal blocks associated with this datum, one needs to define a Lie algebra $\Lc_{\pi \colon X \to C, P_\bullet}(V)$ analogous to $\Lc_{C \setminus P_\bullet}(V)$ which globalizes $\Lk^{\Gamma_i}(V)$ and therefore acts on every $M_i$. With the notation of \cref{rmk:LCPVconstruction}, one first considers a twisted version $\Vc$ over $C$ by defining $\widetilde{\Vc}^\pi_C=(\pi_*\Vc_X)^\Gamma$. Namely, a section of $\widetilde{\Vc}^\pi_C$ is a $\Gamma$-invariant section of $\Vc_X$. Then $\Lc_{\pi \colon X \to C, P_\bullet}(V)$ is defined as in \eqref{eq:LCPV} by replacing $\Vc_C$ with $\widetilde{\Vc}^\pi_C$. 

\begin{defi} The space of \emph{twisted coinvariants} is defined  as the quotient
\[ \VV\left(\pi \colon X \to C; x_\bullet;M_\bullet\right) = \dfrac{M_1 \otimes \dots \otimes M_n}{\Lc_{\pi \colon X \to C, P_\bullet}(V) (M_1 \otimes \dots \otimes M_n)}= \dfrac{M_\bullet}{\Lc_{\pi \colon X \to C, P_\bullet}(V) M_\bullet.}
\]  The space of \emph{twisted conformal blocks} is the linear dual of the space of coinvariants and it is denoted $\VV\left(\pi \colon X \to C; x_\bullet;M_\bullet\right)^\dagger$.\end{defi}

The definition given above is due to \cite{frenkelSzczesny:twusted}, where the authors also discuss independence on the choice of coordinate at $x_\bullet$.

\begin{ex} As discussed in \cref{rmk:LCPVconstruction} \ref{it:LCPV2}, when $V=L_\ell(\g)$, spaces of coinvariants can be taken with respect to $\Lc_{C\setminus P_\bullet}(V)$ or $\g_{C \setminus P}$, yielding the same vector space. Similarly for twisted conformal blocks, when $V=L_\ell(\g)$, one can replace the global Lie algebra $\Lc_{\pi \colon X \to C, P_\bullet}(V)$ with $\g_{C,P_\bullet}^\Gamma$ the $\Gamma$-invariant sections of the Lie algebra $\g \otimes \Ho^0(X\setminus \pi^{-1}(P_\bullet),\Oc_X)$ \cite{frenkelSzczesny:twusted}.   
\end{ex}

\begin{rmk}\label{rmk:twCBrmk} We saw that untwisted coinvariants can be defined also for nodal curves. The definition of twisted coinvariants for covers of nodal curves is however not known in general. When $V = L_\ell(\g)$, associated twisted conformal blocks have been extended also to admissible stable covers \cite{damiolini:2020:conformal,hongkumar:2023, deshpande.mukhopadhyay:2019,hongkumar:2024} using the language of affine Lie algebras. 
\end{rmk}

\section{Properties of conformal blocks}\label{sec:properties} We describe in this section the properties that (twisted) conformal blocks from VOAs satisfy, whose consequences are discussed in \cref{sec:cor}. In particular, one obtains that when $V$ is strongly rational, then spaces of coinvariants fit together to define a vector bundle over $\bMgn$ whose rank and Chern classes can be explicitly computed (see \cref{sec:VB,sec:chern}). When $V=L_\ell(\g)$ we refer the reader to \cite{beauville:96:verlinde, sorger:96:verlinde,looijenga, Fakhruddin,kumar:book,belkale.fakhruddin}.

\subsection{Sheaves on moduli of curves} \label{sec:sheaf} In order to correctly define spaces of coinvariants, we imposed that every irreducible component of $C$ had to be marked by a point (or equivalently that $C \setminus P_\bullet$ was an affine curve). In order to construct a sheaf over the moduli space $\bMgn$ of stable pointed curves, we need to be able to remove this requirement. In fact, one may remove this condition using the following theorem, which goes under the name of \emph{propagation of vacua}.

\begin{thm} \label{thm:POV} The map 
\[M_1 \otimes \dots \otimes M_n \to M_1 \otimes \dots \otimes M_n \otimes V, \qquad \bm{m}_\bullet \mapsto \bm{m}_\bullet \otimes \vac\] induces an isomorphism
\begin{equation} \label{eq:POV} \VV(C;P_\bullet; M_\bullet) \cong \VV(C; P_\bullet, Q; M_\bullet, V).
\end{equation} for every smooth point $Q \in C \setminus P_\bullet$. \end{thm}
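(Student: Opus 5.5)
Throughout I assume $C\setminus P_\bullet$ is affine, so that the left-hand side of \eqref{eq:POV} is defined as in \cref{def:CB}. I would argue by comparing the two chiral Lie algebras. Write $\Lc \coloneqq \Lc_{C\setminus P_\bullet}(V)$ and $\Lc' \coloneqq \Lc_{C\setminus (P_\bullet\cup Q)}(V)$. Restriction of sections gives $\Lc\subset \Lc'$, and every $\Lc'$-element acts on $M_\bullet\otimes V$ through its Laurent expansions at all $n+1$ points. The key observation is that the expansion at $Q$ of an element of $\Lc$ is regular, i.e. it lies in the image of $V\otimes\CC[\![t]\!]$ in $\Lk(V)$. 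By the vacuum axiom $\bm a_{(k)}\vac=0$ for $k\ge 0$, such an element kills $\vac$. Hence the map $\bm m_\bullet\mapsto \bm m_\bullet\otimes\vac$ is $\Lc$-equivariant and descends to a map
\[\Phi\colon \VV(C;P_\bullet;M_\bullet)\to \VV(C;P_\bullet,Q;M_\bullet,V).\]

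\emph{Surjectivity.} I would show that every $\bm m_\bullet\otimes \bm v$ is congruent modulo $\Lc'(M_\bullet\otimes V)$ to some $\bm m'_\bullet\otimes\vac$. Since $V$ is generated from $\vac$ by the negative modes $\bm a_{(-k-1)}$ with $k\ge0$, I would induct on the degree of $\bm v$ (or on the length of a PBW-type monomial). By Riemann--Roch, for large pole order at $P_\bullet$ there exist sections of $\Vc_C\otimes\omega_C$ on $C\setminus(P_\bullet\cup Q)$ whose expansion at $Q$ equals $\bm a\otimes t^{-k-1}dt$ plus terms regular at $Q$. Here one uses the affineness of $C\setminus P_\bullet$ and the $\Gm$/degree filtration on $\Vc_C$, so that only finitely many graded pieces matter. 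Acting by such a section moves $\bm a_{(-k-1)}$ from the $V$-slot onto the $M_i$'s. This costs only lower-order terms, namely regular modes at $Q$ and lower degree in $V$, which are handled by the induction.

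\emph{Injectivity.} Here I would construct an inverse rather than chase kernels. Given $\varphi\in\VV(C;P_\bullet;M_\bullet)^\dagger$, i.e. an $\Lc$-invariant functional, I want to extend it to an $\Lc'$-invariant functional on $M_\bullet\otimes V$. The natural route is to realize $V$ as induced from the regular part. Let $\Lk(V)_{\ge0}$ denote the image of $V\otimes\CC[\![t]\!]$, and let $\mathcal U$ be the appropriate completed enveloping algebra. Then $V$ is a quotient of $\mathcal U\otimes_{\Lk(V)_{\ge0}}\CC\vac$, and (ignoring the relations defining $V$ for a moment) one has
\[\Lc'\text{-coinvariants of } M_\bullet\otimes \mathrm{Ind}\ \cong\ \Lc'_{\ge0}\text{-coinvariants of } M_\bullet,\]
where $\Lc'_{\ge0}$ consists of the elements of $\Lc'$ regular at $Q$, which is exactly $\Lc$. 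This is a Shapiro-type identity, and it requires the decomposition $\Lk(V)=\Lc'+\Lk(V)_{\ge0}$ at $Q$ up to finite corrections; that decomposition is the Riemann--Roch input already used for surjectivity. It then remains to check that the extra relations cutting $V$ out of the induced module act trivially on coinvariants. For this I would use that these relations are themselves produced by the vertex operator structure (Borcherds/associativity identities), which are satisfied globally by the sheaf $\Vc_C$ in the sense of \cite{bzf}.

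The main obstacle is this last step. In contrast with the affine case, where $L_\ell(\g)$ is generated in degree 1 and one can simply use $\g_{C\setminus P_\bullet}$ (cf.\ \cref{rmk:LCPVconstruction} \ref{it:LCPV2}), the chiral Lie algebra $\Lc'$ does not act on $V$ freely. If this step resists, I would first prove the result for the universal object and then deduce it by a filtration argument, using $C_1$-cofiniteness to control the associated graded. Alternatively, I would prove injectivity directly, by showing that any relation $\sum X_j(\bm m^j_\bullet\otimes \bm v_j)$ with output in $M_\bullet\otimes\vac$ can be rewritten using only $\Lc$, again via induction on degree.
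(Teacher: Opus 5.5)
You have a genuine gap at the step you yourself flag as the main obstacle.

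\textbf{What works.} Your well-definedness argument is fine. The Shapiro reduction is also fine once it is made precise. Since $C\setminus P_\bullet$ is affine, the principal parts at $Q$ of sections of $\Vc_C\otimes\omega_C$ over $C\setminus(P_\bullet\cup Q)$ can be prescribed exactly, so $\Lk(V)=\operatorname{im}\Lc'+\Lk(V)_{\ge 0}$ at $Q$. Moreover, a class of $\Lc'$ landing in $\Lk(V)_{\ge 0}$ at $Q$ comes from $\Lc$ after subtracting $\nabla\gamma$ for a suitable global $\gamma$. Together these give $(M_\bullet\otimes\mathrm{Ind})_{\Lc'}\cong\VV(C;P_\bullet;M_\bullet)$, and surjectivity follows since $\mathrm{Ind}\to V$ is onto.

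\textbf{The gap.} Given the above, \eqref{eq:POV} is \emph{equivalent} to the statement that $M_\bullet\otimes N$, with $N=\ker(\mathrm{Ind}\to V)$, maps to zero in these coinvariants. That is the entire content of propagation of vacua, and you do not prove it. None of your three suggestions can work as stated, because none of them uses that the $M_i$ are $V$-modules, and without that input the conclusion is false. Everything you actually use (Riemann--Roch, the grading, $\vac$ cyclic and killed by $\Lk(V)_{\ge0}$) holds for any cyclic quotient $W$ of $\mathrm{Ind}$. Here is a counterexample for such a $W$:
\begin{itemize}
\item Take $V=V_\ell(\g)$, which is of CFT type and $C_1$-cofinite, and $W=L_\ell(\g)$.
\item On $\PP^1$, put at $0$ and $\infty$ the modules induced from $V_\lambda$ and $V_{\lambda^*}$, where $\lambda$ paired with the highest coroot exceeds $\ell$. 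The two-point space is then $\CC$.
\item Act with $X_\theta\otimes(z-1)^{-1}$ and use $(X_\theta t^{-1})^{\ell+1}\vac=0$ in $W$. This gives $X_\theta^{\ell+1}v\otimes\vac\otimes v'\equiv 0$ for $v\in V_\lambda$, $v'\in V_{\lambda^*}$, so the space with $W$ inserted at $1$ is zero.
\end{itemize}
Consequently each of your fallbacks fails:
\begin{itemize}
\item A degree induction that is blind to which relations cut $V$ out of $\mathrm{Ind}$ cannot close the gap.
\item The ``universal object'' route is circular: for $\mathrm{Ind}$ the statement is just Shapiro's lemma, and passing to the quotient is precisely the missing step.
\item Filtration or $C_1$-cofiniteness arguments give at best surjectivity. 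There is no dimension count to upgrade them, since coinvariants can be infinite-dimensional (e.g.\ for the Heisenberg VOA in positive genus).
\end{itemize}

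\textbf{The missing idea.} You need a mechanism that transports identities valid at the $P_i$, where they hold because the $M_i$ are $V$-modules, to the point $Q$. Pointing to the Borcherds identity ``on $\Vc_C$'' does not supply this. One way to do it:
\begin{itemize}
\item Take $\varphi\in\VV(C;P_\bullet;M_\bullet)^\dagger$ and $\bm{m}\in M_\bullet$. By $\Lc$-invariance and the strong residue theorem (applied to the finite-rank pieces of the degree filtration of $\Vc_C$), the matrix coefficients $\bm{v}\mapsto\varphi(\bm{v}^{M_i}(z_i)\bm{m})$ at the $P_i$ are the expansions of one section $\Phi_{\bm{m}}$ of $\Vc_C^\vee$ over $C\setminus P_\bullet$.
\item Set $\tilde\varphi(\bm{m}\otimes\bm{v})=\langle\Phi_{\bm{m}}(Q),\bm{v}\rangle$. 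This is defined on $V$ itself, so no relations need checking.
\item What must be proved is $\Lc'$-invariance of $\tilde\varphi$. This uses two-point correlation functions on $C\times C$ with poles along the diagonal, the associativity/Borcherds identity for the $M_i$ near the $P_i$, and the identity principle on each irreducible component (each meets $P_\bullet$ because $C\setminus P_\bullet$ is affine).
\end{itemize}
For $V=L_\ell(\g)$ with integrable $M_i$, this is exactly what kills the singular vector. The correlation function with $X_\theta$ inserted $\ell+1$ times is regular along the diagonals. It vanishes on the small diagonal near each $P_i$, because $X_\theta(z)^{\ell+1}=0$ on integrable modules, and therefore it vanishes at $Q$. (The survey itself gives no proof of this theorem; it only cites \cite{tuy,bzf,NT,DGT1}.)
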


This result has been proved for $V= L_\ell(\g)$ by \cite{tuy} and generalized to the case of VOAs in \cite{bzf, NT,  DGT1}. In the analytic setting, the result follows from \cite{gui:sewingandprop}. Having this in hand allows us to define $\VV(C;P_\bullet; M_\bullet)$ for every coordinatized pointed curve $(C;P_\bullet; t_\bullet)$.

\pp Spaces of coinvariants can be defined not only for a single coordinatized pointed curve $(C;P_\bullet;t_\bullet)$, but also for families of such curves. This yields a quasi-coherent sheaf  $\VV_g(M_\bullet)$ on the stack $\btMgn$ parametrizing coordinatized pointed curves of genus $g$ which we call the \emph{sheaf of coinvariants}. The dual sheaf, not necessarily quasi coherent, is the sheaf of conformal blocks $\VV_g(M_\bullet)^\dagger$.  Actually, one can show that these sheaves descend to a quasi-coherent sheaf on $\bTMgn$, the stack parametrizing  pointed curves $(C;P_\bullet)$ together with a non-zero tangent vector $\tau_i$ at each point $P_i$ and such that the group of automorphisms of the triple $(C;P_\bullet; \tau_\bullet)$ is finite. Unfortunately, for a general VOA $V$, we do not know to which extent these sheaves descend to the moduli stack of stable curves $\bMgn$ (without the extra data at the marked points).  

The principal factor that controls the descent of these sheaves from $\btMgn$ to $\bTMgn$ and then to $\bMgn$ is the \textit{integrability} of the action of the non-negative part of the Virasoro algebra on the $V$-modules $M_\bullet$. When $p \geq 1$, the endomorphisms $L_p$ act on $M_\bullet$ as operator of negative degree and can therefore be integrated. This yields the first descent from $\btMgn$ to $\bTMgn$. The descent from $\bTMgn$ to $\bMgn$ is controlled by $L_0$, whose integrability is more delicate. If the $V$-modules $M_i$ are such that $L_0$ acts on its degree $d$-component $M_i(d)$ as multiplication by a scalar $d+{a}_i$ for a \textit{rational} number ${a}_i \in \QQ$, then one can prove that the sheaf $\VV_g(M_\bullet)$ descends to $\bMgn$ (see \cite[Section 8]{DGT2}).  

\begin{prop} If $V$ is rational (e.g. $V=L_\ell(\g)$), then $\VV_g(M_\bullet)$ descends to $\bMgn$. If $V$ is $C_2$-cofinite and $M_1,\dots, M_n$ are simple $V$-modules, then $\VV_g(M_\bullet)$ descends to $\bMgn$. \end{prop}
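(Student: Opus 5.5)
The plan is to reduce both statements to the criterion recalled just before the proposition, following \cite[Section 8]{DGT2}. That criterion says $\VV_g(M_\bullet)$ descends from $\bTMgn$ to $\bMgn$ as soon as, for each $i$, $L_0$ acts on every graded piece $M_i(d)$ as multiplication by $d+a_i$ for a rational number $a_i\in\QQ$. The descent from $\btMgn$ to $\bTMgn$ needs nothing extra: the $L_p$ with $p\ge1$ lower the degree, so they act locally nilpotently and integrate.

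For this criterion to make sense, $L_0$ must act semisimply on each $M_i$ with eigenvalues in a single coset $a_i+\NN$. A naive application, however, only requires $\Gm$-equivariance of the rescaling of tangent vectors up to a finite cover. So the order of the argument is as follows.
\begin{enumerate}[label=(\arabic*)]
\item Reduce to the case where each $M_i$ is simple (or indecomposable with the required $L_0$ property). Coinvariants are right exact and additive in each $M_i$, so a direct sum decomposition of the $M_i$ gives a direct sum of sheaves, and descent can be checked summand by summand.
\item For a simple admissible module $M$, show that $L_0$ acts on the lowest nonzero graded piece $M(0)$ by a scalar $h$. Since $M(0)$ is finite dimensional and $L_0$ commutes with the action of Zhu's algebra $A(V)$ on the top level, Schur's lemma applies: $M(0)$ is a simple $A(V)$-module. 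Then $L_0$ acts on $M(d)$ by $d+h$, because $M$ is generated by $M(0)$ and modes of degree $d$ shift the $L_0$-eigenvalue by $d$.
\item Show that $h\in\QQ$.
\end{enumerate}

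In the $C_2$-cofinite case, step (1) is vacuous because the $M_i$ are assumed simple. In the rational case, the category is semisimple, so every admissible module is a direct sum of the finitely many simple ones and step (1) applies.

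Step (3) is the main obstacle. For $V$ rational and $C_2$-cofinite, rationality of conformal weights and of $c_V$ is a theorem of Dong--Li--Mason, proved via modular invariance of characters in the spirit of Zhu: the characters of the simples span an $\mathrm{SL}_2(\ZZ)$-invariant space, and Anderson--Moore type arguments give finite order of $T$, hence $h-c_V/24\in\QQ$ and $c_V\in\QQ$. For a merely $C_2$-cofinite $V$, I would invoke Miyamoto's extension of modular invariance using pseudo-trace functions, which again yields $h\in\QQ$ for simple modules.

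If rationality without $C_2$-cofiniteness is intended, I would fall back on the fact that for the examples considered here (e.g.\ $L_\ell(\g)$) the Sugawara formula gives explicitly $h_\lambda=\frac{(\lambda|\lambda+2\rho)}{2(\ell+\check h)}\in\QQ$. With $a_i=h_i$ rational, the criterion of \cite[Section 8]{DGT2} then gives the descent.
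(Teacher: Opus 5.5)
You follow the paper's route. The paper gives no argument beyond the descent criterion of \cite[Section 8]{DGT2} quoted just before the statement. Your Steps (1)--(2), combined with Miyamoto's theorem, do verify that criterion for simple modules over a $C_2$-cofinite $V$, and likewise for rational $C_2$-cofinite $V$ via Dong--Li--Mason.

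\textbf{The gap is Step (3) in the rational half.} With the paper's definitions, ``rational'' does not include $C_2$-cofiniteness, and the paper itself notes that it is open whether rationality implies it. Both the Dong--Li--Mason and the Miyamoto arguments obtain $h\in\QQ$ from modular invariance of (pseudo-)trace functions, which requires $C_2$-cofiniteness. Retreating to the Sugawara formula for $L_\ell(\g)$ proves an example, not the claim. So as written you have not shown descent for a rational $V$ that is not known to be $C_2$-cofinite; the paper's one-line justification does not address this point either.

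\textbf{The missing observation is that descent along the $\Gm^n$-torsor $\bTMgn\to\bMgn$ does not use $a_i\in\QQ$.}
\begin{enumerate}[label=(\roman*)]
\item What is needed is an action of $\mathrm{Aut}\,\mathcal{O}=\Gm\ltimes\mathrm{Aut}_+\mathcal{O}$ on each $M_i$ compatible with $\Lk(V)$. You may let $-t\partial_t$ act by the degree operator $\deg$ rather than by $L_0$, keeping $L_p$ for $-t^{p+1}\partial_t$ with $p\geq1$. By your Step (2), $\deg$ equals $L_0-a$ on each simple summand.
\item This is still a Lie algebra map, since $[\deg,L_p]=-pL_p=[L_0,L_p]$. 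It integrates for every $a\in\CC$, because $\deg$ has integer eigenvalues.
\item For homogeneous $\bm{a}\in V$ one has $[\deg,\bm{a}_{(n)}]=(\mathrm{wt}\,\bm{a}-n-1)\bm{a}_{(n)}=[L_0,\bm{a}_{(n)}]$, so $\deg-L_0$ commutes with all modes. Hence $c^{\deg}$ carries the relations $\Lc_{C\setminus P_\bullet}(V)M_\bullet$ computed in a coordinate $t$ onto those computed in $ct$, exactly as $c^{L_0}$ would if it were defined. This gives the descent data.
\item The shift by $a_i$ reappears only in the connection, as the term $\sum_i a_i\Ac_{\Psi_i}$ of \cref{prop:connectionMgn}, which makes sense for $a_i\in\CC$.
\end{enumerate}
With this, Steps (1)--(2) prove both halves of the statement. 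Step (3) is needed only to match the criterion in the literal form quoted in the paper.
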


With the exception of \cref{prop:connectionTMgn} and \cref{sec:MF}, the reader can safely read all the statements pretending that sheaves of coinvariants always descend to $\bMgn$.

\subsection{Coherence} Spaces of coinvariants are not finite dimensional in general. For instance, one can show that spaces of conformal blocks associated with the Heisenberg VOA are finite dimensional over rational curves, but infinite dimensional for higher genus curves \cite{bzf}. Since coherent sheaves are more tractable than quasi-coherent ones, it is convenient to identify cases under which spaces of conformal blocks are finite dimensional.  We summarize the current state of the art in the following statement:

\begin{prop} \label{prop:coherence} \begin{enumerate}[label=(\alph*)]
    \item \label{it:coh1} If $V$ is $C_2$-cofinite, then $\VV_g(M_\bullet)$ is coherent on $\bMgn$. 
    \item \label{it:coh2} If $V$ is generated in degree 1 then $\VV_0(M_\bullet)$  is a quotient of the constant vector bundle with fiber $M_1(0) \otimes \dots \otimes M_n(0)$. Therefore $\VV_0(M_\bullet)$ is a globally generated coherent sheaf.
    \end{enumerate}
\end{prop}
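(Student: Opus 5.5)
Both parts reduce to one fiberwise finiteness statement, obtained by a filtration argument, followed by a globalization step over the moduli stack. Fiberwise, fix a coordinatized pointed curve $(C;P_\bullet;t_\bullet)$; by \cref{thm:POV} we may add points and assume $C\setminus P_\bullet$ is affine. Equip $M_\bullet=M_1\otimes\cdots\otimes M_n$ with the increasing filtration by total degree, $F_dM_\bullet=\bigoplus_{d_1+\dots+d_n\le d}M_1(d_1)\otimes\cdots\otimes M_n(d_n)$. Each piece is finite dimensional by admissibility. It therefore suffices to show that $F_NM_\bullet$ surjects onto $\VV(C;P_\bullet;M_\bullet)$ for some $N$, i.e. that $M_\bullet=F_NM_\bullet+\Lc_{C\setminus P_\bullet}(V)M_\bullet$.

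For part \ref{it:coh1} I would pass to the associated graded. There $\mathrm{gr}\,M_i$ becomes a module over the Zhu $C_2$-algebra $R_V=V/C_2(V)$, which is finite dimensional by $C_2$-cofiniteness. More precisely, modes $\bm{a}_{(k)}$ acting in high negative degree can be rewritten, modulo lower filtration, using elements of $C_2(V)$. Following \cite{DGT2} (the argument of Abe--Nagatomo/Buhl), one shows that each $M_i$ is spanned by vectors $\bm{a}^1_{(-k_1)}\cdots\bm{a}^r_{(-k_r)}w$, with $\bm{a}^j$ running over a finite set of homogeneous lifts of generators of $R_V$, the $k_j$ suitably ordered and bounded, and $w$ in finitely many low-degree components. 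Next, using Riemann--Roch on the affine curve $C\setminus P_\bullet$, for each generator $\bm{a}$ and each large $k$ choose a global section of $\Vc_C\otimes\omega_C$ with a pole of order exactly $k$ at $P_i$ and bounded poles at the other points. Its image in $\Lc_{C\setminus P_\bullet}(V)$ acts on $M_\bullet$ as $\bm{a}_{(-k)}$ at $P_i$ plus operators of lower total degree elsewhere. This lets us trade high-degree creation operators for elements of lower filtration modulo the image of $\Lc_{C\setminus P_\bullet}(V)$. Induction on the filtration degree then gives the required $N$; the key point is that $N$ is bounded by genus, pole data and the generator degrees, hence locally constant in families.

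To globalize, I would run the same argument over an affine base $S\to\bMgn$ (or $\btMgn$, then descend) with a family of curves. The sections above can be chosen locally on $S$ because $h^1$ vanishes for high pole order, uniformly in families. The sheaf of coinvariants is then a quotient of $F_NM_\bullet\otimes\Oc_S$, hence of finite type. Since it is quasi-coherent on a Noetherian stack, it is coherent, and coherence is preserved by the descent of \cref{sec:sheaf}. The main obstacle is the spanning-set and degree-reduction step: one must control, via the Borcherds commutator and associativity formulas, that the error terms really land in lower filtration. This is where $C_2$-cofiniteness is used, and nodal fibers require working with $\Vc_C$ on the normalization together with the gluing condition.

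For part \ref{it:coh2}, in genus $0$ we take $C=\PP^1$ and work with $V$ generated by $V(1)$. I would show the sharper statement with $N=0$, namely that $M_1(0)\otimes\cdots\otimes M_n(0)\to\VV$ is surjective. Since $V$ is generated in degree $1$, each $M_i$ is spanned by $M_i(0)$ under monomials in the modes $\bm{a}_{(-k)}$ with $\bm{a}\in V(1)$ and $k\ge 1$. For $\bm{a}\in V(1)$ these modes behave like current-algebra elements: the sections $\bm{a}\otimes f\,dz$, with $f$ a rational function on $\PP^1$ regular away from $P_\bullet$, give elements of $\Lc_{\PP^1\setminus P_\bullet}(V)$ acting as a Lie algebra of the form $V(1)\otimes \Oc(\PP^1\setminus P_\bullet)$. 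In genus zero, partial fractions provide $f$ with a pole of order exactly $k$ at $P_i$ whose expansions at the other $P_j$ are regular. Therefore one can move any $\bm{a}_{(-k)}$ off $M_i$, at the cost of operators of nonnegative mode index at the other points, which lower degree. Induction on total degree, exactly as in the Lie algebra case of \cref{ex:CBonP1}, yields surjectivity. In families over $\overline{\Mc}_{0,n}$, this gives a surjection from the trivial bundle with fiber $M_1(0)\otimes\cdots\otimes M_n(0)$, using coordinates on $\PP^1$ and gluing at nodes via \cref{thm:POV} and factorization-type compatibility. This map descends because $L_0$ acts on $M_i(0)$ by scalars. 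Global generation and coherence follow.
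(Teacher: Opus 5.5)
The paper gives no argument for either part; it only cites \cite{DGK1} and \cite{DG}. So I assess your outline directly. Both parts have the right core idea, but each breaks down at the boundary of the moduli space.

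\textbf{Part (b).} Your current-algebra argument is the right one on a smooth $\PP^1$. The nodal case, however, rests on tools you do not have.
\begin{enumerate}[label=(\roman*)]
\item \cref{thm:factorization} requires $V$ rational, and (b) does not assume this. The Heisenberg VOA is generated in degree $1$, is not rational, and still has finite-dimensional genus-zero coinvariants.
\item \cref{thm:POV} only inserts vacua; it glues nothing.
\item Fibrewise surjectivity at nodal curves would not suffice anyway. A map to a quasi-coherent sheaf not yet known to be of finite type can be surjective on every fibre without being surjective, so the surjection must be proved over the base.
\end{enumerate}
All three problems disappear if you run your argument directly on a family $\pi\colon\Cc\to S$ of genus-zero stable curves. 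Add vacuum-decorated sections so that $\Cc\setminus P_\bullet$ is affine; since $V(0)=\CC\vac$, this does not change $M_\bullet(0)$. Every fibre is a tree of $\PP^1$'s, so $H^1(C,\Oc_C(D))=0$ for every effective $D$ supported on the sections. Hence $R^1\pi_*\Oc_{\Cc}((k-1)P_i)=0$. On affine $S$ there is therefore $f_{i,k}\in\Ho^0(\Cc,\Oc_{\Cc}(kP_i))$ with expansion $t_i^{-k}+\cdots$ at $P_i$ and no pole at any other marked point, nodes notwithstanding. For $\bm a\in V(1)$, the element $\bm a\otimes f_{i,k}$ lifts to a section of $\Vc_{\le 1}\otimes\omega$ on the affine $\Cc\setminus P_\bullet$. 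When $L_1\bm a\neq 0$ the correction lies in $\vac\otimes\omega$ and acts by scalars. Your induction on degree then runs verbatim with $\Oc_S$-coefficients.

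You should also state the hypothesis hidden in ``$M_i$ is spanned by monomials applied to $M_i(0)$'', namely $M_i=V\cdot M_i(0)$ (for instance, $M_i$ simple). Without it (b) fails. Decorate $(\PP^1;0,1,\infty)$ by $L_\ell(\g)$ with grading shifted up by one, and by $L_\ell(\g)$ twice. Then $M_1(0)\otimes M_2(0)\otimes M_3(0)=0$, while by \cref{thm:POV} and \cref{ex:CBonP1} the coinvariants are $\CC$.

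\textbf{Part (a).} The single-curve filtration argument is sound, and the spanning step is not where the difficulty lies. What you use is that, in the Gaberdiel--Neitzke/Buhl spanning set, large degree forces a large leading mode $k_1$. This is exactly where $C_2$-cofiniteness and finite generation of the $M_i$ enter. For contrast, in the Heisenberg VOA with $\bm h\in V(1)$, the vectors $\bm h_{(-1)}^r\vac$ have degree $r$ but leading mode $1$.

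The step that is actually unresolved is the uniformity of $N$ near nodal fibres, which you only mention. Working ``on the normalization with the gluing condition'' is a fibrewise device. A family smoothing a node has no normalization, so this cannot give a bound valid on a neighbourhood of a boundary point.

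What your argument needs is the sheaf $\Vc_{\Cc}$ on the total space of a family with nodal fibres, with two properties:
\begin{enumerate}[label=(\roman*)]
\item the finite-rank pieces $\Vc_{\le d}$ of its degree filtration are coherent and flat over $S$;
\item $\mathrm{gr}_d\Vc_{\Cc}\cong V(d)\otimes\omega_{\Cc/S}^{\otimes -d}$.
\end{enumerate}
Granting this, apply relative Serre vanishing to $\Vc_{\le d}\otimes\omega_{\Cc/S}(D)$ with $D\ge m_0\sum_j P_j$; the sections are relatively ample once $\Cc\setminus P_\bullet$ is affine. On affine $S$, and for all $k$ at once, this produces your sections with leading term $\bm a\, t_i^{-k}dt_i$ at $P_i$ and poles of order at most $m_0$ elsewhere. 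The nodes never enter again, and you would obtain (a) without any appeal to factorization. As written, though, this input is asserted rather than supplied, and it is the heart of the boundary case.
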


\Cref{prop:coherence} \ref{it:coh1} was proved in \cite{DGK1}, generalizing the analogous result from \cite{DGT2}, where rationality was also assumed. The result in the case of $V=L_\ell(\g)$ was already proved in \cite{tuy}, while the coherence on $\Mgn$ was established by \cite{abe.nagatomo}.

\Cref{prop:coherence} \ref{it:coh2}, proved in \cite{DG}, can be interpreted as a generalization of the analogous statement for the VOA $V=L_\ell(\g)$---indeed generated in degree 1. For $L_\ell(\g)$, this is a consequence of the description of spaces of coinvariants given in \cref{ex:CBonP1}, and extended to families of rational pointed curves in \cite{Fakhruddin}. In fact, for every $L_\ell(\g)$-module $M=L_{\ell,\lambda}$ one has that $M(0)=V_\lambda$. By computing the Chern classes of sheaves of coinvariants, in \cite{DG} the author and Gibney provide examples of sheaves of coinvariants which are not globally generated in genus zero (see \cref{sec:chern}).

\subsection{Factorization and Sewing} \label{sec:sewing} We now consider the situation in which $(C,P_\bullet)$ is a stable pointed curve with (at least) one node $Q$. Denote by $(\Ct;P_\bullet, Q_\pm)$ the partial normalization of $C$ obtained resolving the singularity at $Q$, and where $Q_\pm$ denote the two points lying above the node $Q$. Recall that if $W$ is a $V$-module, we denote by $W'$ its contragredient. The \emph{factorization theorem} asserts the following.

\begin{thm} \label{thm:factorization}  Let $V$ be a rational VOA and denote by $\Ck_V$ the set of simple $V$-modules. Then the map
\[M_1 \otimes \dots \otimes M_n \to M_1 \otimes \dots \otimes M_n \otimes W \otimes W', \qquad \bm{m}_\bullet \mapsto \bm{m}_\bullet \otimes \id_{W(0)}\] 
induces an isomorphism
\begin{equation} \label{eq:fact}  \VV(C;P_\bullet;M_\bullet) \cong  \bigoplus_{W \in \Ck_V}  \VV\left(\Ct; P_\bullet, Q_\pm; M_\bullet, W, W'\right).
\end{equation}
\end{thm}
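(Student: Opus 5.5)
We reduce the statement to a comparison between the chiral Lie algebra of the nodal curve and that of its partial normalization, and then analyse the $\Lk(V)$-modules that appear at the two preimages of the node. The plan is to factor the map in \eqref{eq:fact} in two steps. First, we compare $\VV(C;P_\bullet;M_\bullet)$ with the coinvariants of $\Ct$ with respect to a \emph{smaller} Lie algebra, with the two points $Q_\pm$ decorated by something large, namely $V$ itself viewed as a bimodule. Second, we use rationality to decompose that large object into $\bigoplus_{W} W \otimes W'$.

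Concretely, choose coordinates $s_\pm$ at $Q_\pm$ with $s_+s_-=0$ describing the node. Sections of $\Vc_C\otimes\omega_C/\operatorname{im}\nabla$ on $C\setminus P_\bullet$ are exactly sections on $\Ct\setminus P_\bullet$ whose expansions at $Q_+$ and $Q_-$ are compatible under the involution $\theta$ that identifies $\Lk(V)_{Q_+}$ with the ``opposite'' copy $\Lk(V)_{Q_-}$. This involution is $\bm a\, s_+^{n}\mapsto -\bm a^{\rm opp}\, s_-^{-n-2+\deg}$, twisted by $(-1)^{L_0}e^{L_1}$ as in the definition of the contragredient. Let $\Ak$ denote the mode (current) algebra $U(\Lk(V))$, suitably completed. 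The key intermediate claim is
\[ \VV(C;P_\bullet;M_\bullet)\cong \VV(\Ct;P_\bullet,Q_\pm;M_\bullet,\Ak^{\rm reg})_{\text{restricted}}, \]
that is, coinvariants of $M_\bullet\otimes\Ak$ for $\Lc_{\Ct\setminus P_\bullet\cup Q_\pm}(V)$, where $\Ak$ carries commuting left and right actions placed at $Q_+$ and $Q_-$. Via the map $\bm m_\bullet\mapsto\bm m_\bullet\otimes 1$, this amounts to identifying the coinvariants of the ``node-compatible'' subalgebra with a balanced tensor product over $\Ak$. I would prove this by a propagation-of-vacua type argument in the spirit of \cref{thm:POV}, comparing the two chiral Lie algebras through the exact sequence relating $\Ho^0$ on $C$ and on $\Ct$.

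Second, we use rationality. By Zhu's theory, for rational (indeed $C_2$-cofinite) $V$ the degree-zero part of the mode algebra, which is Zhu's algebra $A(V)$, is finite-dimensional semisimple, $A(V)\cong\bigoplus_{W}\End(W(0))$. Moreover, the relevant completed quotient of $\Ak$ acting on $\NN$-graded modules decomposes as a bimodule as $\bigoplus_{W\in\Ck_V} W\otimes W'$, which is the Peter--Weyl-type decomposition. Under this decomposition, the image of $1$ is $\sum_W\id_{W(0)}$, where $\id_{W(0)}\in W(0)\otimes W(0)^\vee\subset W\otimes W'$. This explains the map in the statement. Substituting into the first step then yields \eqref{eq:fact}.

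The main obstacle is the first step: controlling the chiral Lie algebra $\Lc$ near the node, since it is defined through the sheaf $\Vc_C$ and the quotient by $\operatorname{im}\nabla$ rather than through a naive tensor product. I would try first to reduce to statements about the filtration of $M$ by degree, and to show surjectivity and injectivity separately. Surjectivity should follow from propagation of vacua together with the fact that $W$ is generated by $W(0)$. Injectivity should follow by constructing, from a conformal block on $\Ct$, a block on $C$ via the pairing $W\otimes W'\to\CC$ summed over $W$; this requires $\Ck_V$ to be finite and each $W$ to be simple, so that the relevant trace-type sums are finite.
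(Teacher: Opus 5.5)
\textbf{There is a genuine gap: the universal bimodule in your first step is the wrong object, so your two steps cannot be composed into the map of the statement.}

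Your overall plan is the one the paper points to after the theorem. You trade the node for a universal bimodule at $Q_\pm$ and then split it using rationality. In the paper this is the isomorphism \eqref{eq:factMTA} with the mode transition algebra $\Ak(V)$, followed by $\Ak(V)\cong\bigoplus_{W\in\Ck_V}W\otimes W'$ with $\Ic_0=\sum_W\id_{W(0)}$. Your universal object is instead the completed current algebra; call it $U$ to avoid a clash with $\Ak(V)$.

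Here is why it fails. If $U\to B$ is any map of $U$-bimodules, the image $\epsilon$ of $1$ satisfies $u\cdot\epsilon=\epsilon\cdot u$ for every mode $u$. Take $B=\bigoplus_W W\otimes W'=\bigoplus_W\bigoplus_{d,d'}\Hom(W(d),W(d'))$. Then on each $W$, $\epsilon$ is a $V$-endomorphism, so by Schur it is a multiple of $\sum_{d}\id_{W(d)}$. That is not a finite sum, so $\epsilon=0$.

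In particular $\sum_W\id_{W(0)}$ is not the image of $1$. A mode $u$ of positive degree with $u(W(0))\neq 0$ gives $u\circ\id_{W(0)}\neq 0=\id_{W(0)}\circ u$. The unit of $U$ would correspond to $\sum_d\id_{W(d)}$, which is the sewing element of \cref{thm:sewing} at $t=1$, not the factorization element at $t=0$.

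Independently, $U$ does not carry the required right action at $Q_-$. Expansions of sections with poles at $Q_-$, transported by $\vartheta$, give right multiplication by modes of unbounded positive degree. That does not converge on the completion of $U$ needed for the left action at $Q_+$.

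\textbf{What is missing is the actual content of the theorem.} The bimodule that appears at the node is $\Ak(V)$. It is bigraded, built from Zhu's algebra $A(V)=\Ak_{0,0}$ by inducing on both sides, and it is not a bimodule quotient of $U$. The element used is $\Ic_0$, the unit of $A(V)$, not the unit of $U$.

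That only Zhu's algebra survives at the node is a geometric fact about $\Vc_C\otimes\omega_C/\operatorname{im}\nabla$, and it has to be proved. One way is as the $q=0$ limit of the gluing $s_+s_-=q$, under which a mode of degree $k$ at $Q_+$ equals $q^{-k}$ times a mode of degree $-k$ at $Q_-$. In that limit, node-compatible sections have expansions at $Q_\pm$ with no positive-degree (creation) part, and their degree-zero parts are matched through $\vartheta$. Hence their action at $Q_\pm$ annihilates $\id_{W(0)}$, and on $W(0)\otimes W(0)^\vee$ they act only through $A(V)$ and its anti-involution. This is what makes $\bm{m}_\bullet\mapsto\bm{m}_\bullet\otimes\id_{W(0)}$ descend to coinvariants; a propagation-of-vacua argument does not supply it.

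The isomorphism then needs two further facts:
\begin{enumerate}
\item[(i)] The coinvariants on $\Ct$ decorated by $W,W'$ are spanned by the image of $M_\bullet\otimes W(0)\otimes W(0)^\vee$. This is your surjectivity remark.
\item[(ii)] There are no further relations. This uses that each simple $W$ is the generalized Verma module induced from the simple $A(V)$-module $W(0)$, which holds for rational $V$, together with $A(V)\cong\bigoplus_W\End(W(0))$.
\end{enumerate}
Note that $C_2$-cofiniteness gives only finite-dimensionality of $A(V)$, not semisimplicity. Semisimplicity fails for logarithmic $V$, which is why the general statement needs $\Ak(V)$.

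Your injectivity sketch runs the wrong way. Injectivity of $\VV(C;P_\bullet;M_\bullet)\to\bigoplus_W\VV(\Ct;P_\bullet,Q_\pm;M_\bullet,W,W')$ means every block on $C$ comes from blocks on $\Ct$. Trace-type sums over $W$ belong to sewing, not to factorization.

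A minor correction: the involution shifts by $2\deg\bm{a}$, that is $\bm{a}\,s_+^{n}\mapsto\pm\bm{a}\,s_-^{2\deg\bm{a}-n-2}$ up to $L_1$-corrections, not by $\deg\bm{a}$.
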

When $V=L_\ell(\g)$, this has been proved in \cite{tuy} and, in the unpublished but foundational manuscript \cite{BFM}, it is considered the case of the Virasoro VOA. For rational VOAs, the theorem has been proved for rational curves in \cite{NT}, and for every genus in \cite{DGT2}.

\medskip 

We note that since $V$ is rational, the set $\Ck_V$ indexing the direct sum on the right hand side is finite. Furthermore, on the right hand side, we can interpret the point $Q_+$ being decorated by $W$ and the point $Q_-$ by $W'$. However, we can also interpret the divisor $Q_+ + Q_-$ to be decorated by the $V$-bimodule $\bigoplus_{W \in \Ck_V}W \otimes W'$ and so we can write 
\[ \bigoplus_{W \in \Ck_V}  \VV(\Ct; P_\bullet, Q_\pm; M_\bullet, W, W') \cong \VV\left(\Ct; P_\bullet, Q_\pm; M_\bullet, \bigoplus_{W \in \Ck_V}W \otimes W'\right).
\] 

\pp \label{pp:MTA} When $V$ is not rational, an appropriate modification of this $V$-bimodule is necessary to obtain an analogue of \cref{thm:factorization}. In fact, in \cite{DGK1} it is shown that, for every VOA $V$ which is $C_1$-cofinite and of CFT type (as those considered here), one has an isomorphism
\begin{equation} \label{eq:factMTA} \VV(C;P_\bullet;M_\bullet) \cong \VV\left(\Ct; P_\bullet, Q_\pm; M_\bullet, \Ak(V)\right),
\end{equation} where $\Ak(V)$ is the \emph{mode transition algebra} of $V$. The algebra $\Ak(V)=\bigoplus_{(d,e) \in \NN^2}\Ak_{d,e}$---which is a bi-graded $V$-bimodule---has been formally introduced in \cite{DGK2} and, similarly to the Zhu algebras $\Aa_i(V)$, to which it is closely related, it packages much of the ``\textit{highest weight}'' representation theory of $V$ (see also \cite{DGK3}). The isomorphism \eqref{eq:factMTA} is induced by 
\[ M \to M \otimes \Ak(V), \qquad \bm{m}_\bullet \mapsto \bm{m}_\bullet \otimes \Ic_0,
\]where $\Ic_0 \in \Ak_{0,0}$ is a specified element arising from the vacuum element of $V$. When $V$ is rational, then $\Ak$ decomposes as   $\bigoplus_{W \in \Ck_V}W \otimes W'$ and $\Ic_0 = \sum \id_{W(0)}$.

\pp We describe here a more refined version of \cref{thm:factorization}, called \emph{sewing} or \emph{smoothing} theorem which will be crucial to extend local properties of $\VV_g(M_\bullet)$ from $\TMgn$ (or $\Mgn$) to the whole $\bTMgn$ (resp. $\bMgn$). To explain this, we will set up some notation and, for simplicity of notation, use the moduli space $\bMgn$ rather than $\bTMgn$.

Recall that $\bMgn$ admits a stratification by opens
\[ \Mgn=\bMgn^{(0)} \subset\bMgn^{(1)} \subset \dots \subset \bMgn^{(3g+n-4)}\subset \bMgn^{(3g+n-3)}= \bMgn,
\] where $\bMgn^{(k)}$ parametrizes those curves which have at most $k$ nodes. 

Let $x \in \bMgn^{(k)}\setminus \bMgn^{(k-1)}$ be a point which corresponds to a curve $C$ which has exactly $k$ nodes. One can associate to such a curve and to one of the nodes---denote it $Q$---two families of curves over $\DD \coloneqq \Spec(\CC[\![t]\!])=\{ 0, \eta\}$ whose spaces of coinvariants will be related:
\begin{enumerate}[label=(\Roman*)]
    \item a \textit{smoothing} family $(\Cc,\Pc_\bullet)$, such that $\Cc|_\eta$ has $k-1$ nodes, $\Cc|_0=C$ and, locally around $Q$, $\Cc$ has equation $xy=t$; This implies that the curve $\Cc \to \Delta$ corresponds to a map $\Delta \to \bMgn^{(k)}$ whose closed point, $C$, maps to $\bMgn^{(k)}$, but which generically lands in $\bMgn^{(k-1)}$. 
    \item a \textit{trivial} family $(\Ct[\![t]\!]; P_\bullet, Q_\pm)$, where $\Ct$ is the partial normalization of $C$ obtained resolving the singularity at $Q$, and where $Q_\pm$ denote the two points lying above the node $Q$. Assuming that $Q$ is a non-separating node, then this family corresponds to a map $\DD \to \bM_{g-1,n+2}^{(k-1)}$. If $Q$ is a separating node, then it would correspond to a map $\DD \to \bM_{g_1,n_1+1}^{(k-1)} \times \bM_{g_2, n_2+1}$ for appropriate $g_i$ and $n_i$ such that $g_1+g_2=g$ and $n_1+n_2=n$.
\end{enumerate}
Now that we have set up the notation for the geometric input of conformal blocks, we are ready to state the \emph{sewing theorem}.

\begin{thm}\label{thm:sewing} \cite{tuy,BFM,NT,DGT2}
    Let $V$ be a rational and $C_2$-cofinite VOA. Then the map 
    \begin{equation} \label{eq:mapsew} M_\bullet \to M_\bullet \otimes (W \otimes W')[\![t]\!], \qquad \bm{m}_\bullet \mapsto \bm{m}_\bullet \otimes \sum_{d \in \NN} \id_{W(d)} \, t^d
    \end{equation} induces an isomorphism 
    \begin{equation} \label{eq:sew} \VV (\Cc, \Pc_\bullet;M_\bullet) \cong \bigoplus_{W \in \Ck_V} \VV\left(\widetilde{C}[\![t]\!]; P_\bullet, Q_\pm;M_\bullet, W, W'\right).
\end{equation} 
\end{thm}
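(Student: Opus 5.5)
The plan is to argue in three steps: construct the map and check that it descends to coinvariants; identify its special fiber with the factorization isomorphism of \cref{thm:factorization}; and conclude by completeness over $\CC[\![t]\!]$, using coherence and a Nakayama-type argument. Throughout, $\Cc \setminus \Pc_\bullet$ is assumed affine over $\DD$; if not, we first add auxiliary points decorated by $V$ and use propagation of vacua (\cref{thm:POV}), which holds in families.

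\emph{Step 1: the map is well defined.} Near the node, $\Cc$ is given by $xy=t$. A section of $\Vc_{\Cc}\otimes\omega_{\Cc}/\operatorname{im}\nabla$ over $\Cc\setminus\Pc_\bullet$ restricts, after setting $x=t_+$ and $y=t_-$, to a pair of formal expansions at $Q_\pm$. Their modes are related by $\bm{a}_{(n)} \leftrightarrow t^{\deg}\cdot\theta(\bm{a})_{(m)}$, where $\theta$ is the involution defining the contragredient pairing. This gives a map of Lie algebras
\[
\Lc_{\Cc\setminus\Pc_\bullet}(V)\to \Lc_{\Ct[\![t]\!]\setminus(P_\bullet,Q_\pm)}(V)\widehat{\otimes}\CC[\![t]\!].
\]
The key identity is that $\sum_d \id_{W(d)}t^d \in (W\otimes W')[\![t]\!]$ is annihilated by the combined action of the modes of the chiral Lie algebra at $Q_+$ and $Q_-$. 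This is a formal consequence of the definition of $W'$, namely $\langle \bm{a}_{(n)}w',w\rangle=\langle w',\theta(\bm{a})_{(m)}w\rangle$, combined with the grading shift under $xy=t$. Hence \eqref{eq:mapsew} is equivariant and descends to a $\CC[\![t]\!]$-linear map of coinvariants.

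\emph{Step 2: reduction to the special fiber.} Modulo $t$, the map \eqref{eq:mapsew} specializes to $\bm m_\bullet\mapsto \bm m_\bullet\otimes\id_{W(0)}$, which is exactly the factorization isomorphism of \cref{thm:factorization} for $C=\Cc|_0$. Both sides of \eqref{eq:sew} are finitely generated $\CC[\![t]\!]$-modules by \cref{prop:coherence}\ref{it:coh1}, which is where $C_2$-cofiniteness enters. Both sides are also compatible with base change to $t=0$, since forming coinvariants is right exact. Nakayama's lemma then shows the map is surjective.

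\emph{Step 3: injectivity, the main obstacle.} The right-hand side is a trivial family, so it is free over $\CC[\![t]\!]$; concretely it equals $\bigoplus_W\VV(\Ct;P_\bullet,Q_\pm;M_\bullet,W,W')[\![t]\!]$, finite and free. It therefore suffices to construct a map in the opposite direction, or equivalently to show that the left-hand side has no $t$-torsion and has generic rank at least the rank of the right-hand side. I would first try the dual formulation. Given a conformal block $\psi$ on the normalization, form the formal series $\sum_d \psi(\,\cdot\,\otimes \id_{W(d)})t^d$ and show that it is invariant under $\Lc_{\Cc\setminus\Pc_\bullet}(V)$ order by order in $t$, by expanding a global section near the node and applying the Step 1 identity degree by degree. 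This gives an injection of conformal blocks
\[
\bigoplus_W\VV(\Ct[\![t]\!];\dots)^\dagger\hookrightarrow\VV(\Cc;\dots)^\dagger,
\]
which inverts the map on duals. The delicate point is checking that the constructed functional is well defined on all of $\Lc_{\Cc\setminus\Pc_\bullet}(V)$ and not only on sections generated from the two branches. I expect to handle this by filtering sections by pole order at the node and inducting on $t$-adic order, using rationality so that $W$ ranges over a finite set and each graded piece is finite dimensional.
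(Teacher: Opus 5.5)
Your Steps 1 and 2 follow the paper's route. The paper reduces the theorem to showing that \eqref{eq:mapsew} descends to coinvariants. It then concludes via Nakayama, using finite generation (which is where $C_2$-cofiniteness enters) and the fact that the map reduces mod $t$ to the factorization isomorphism of \cref{thm:factorization}.

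The gap is in Step 3, which does not do what you claim. Write $\Phi\colon A\to B$ for the map of coinvariants from Step 1. For a block $\psi$ on the normalization, the series $\sum_d\psi(\,\cdot\,\otimes\id_{W(d)})t^d$ is simply $\psi\circ\Phi$.
\begin{itemize}
\item You are therefore constructing the transpose $\Phi^\dagger$, not an inverse and not a map in the opposite direction.
\item The ``delicate point'' you isolate, invariance under all of $\Lc_{\Cc\setminus\Pc_\bullet}(V)$, is exactly the content of Step 1 again.
\item Injectivity of $\Phi^\dagger$ does not help either. Since $\Phi^\dagger(\psi)=0$ if and only if $\psi$ kills $\operatorname{im}\Phi$, it is a statement about $\operatorname{coker}\Phi$, i.e.\ a weak form of the surjectivity you already have. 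It says nothing about $\ker\Phi$.
\item Your alternative criterion has the inequality backwards. To kill the kernel you would need the generic rank of $A$ to be \emph{at most} that of $B$.
\end{itemize}

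No new construction is needed, because you already stated the missing ingredient: $B=\bigoplus_{W}\VV(\Ct[\![t]\!];P_\bullet,Q_\pm;M_\bullet,W,W')$ is free of finite rank over $\CC[\![t]\!]$. Hence the surjection of Step 2 splits, so $A\cong K\oplus B$ with $K=\ker\Phi$ finitely generated. Reducing mod $t$ gives $A/tA\cong K/tK\oplus B/tB$, and $\Phi$ mod $t$ is the projection onto $B/tB$. Since this reduction is the factorization isomorphism, $K/tK=0$, so $K=0$ by Nakayama.

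This flatness of the target is what is implicit in the paper's one-line claim that the map ``necessarily must be an isomorphism.'' The real work lies in Step 1, together with base change of coinvariants to $t=0$, and you have identified that correctly.
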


Once \cref{thm:factorization} is proved, in order to prove \cref{thm:sewing} it is enough to show that the map \eqref{eq:mapsew} indeed descends to a map between coinvariants. By Nakayama's Lemma, and using the fact that coinvariants are finitely generated (by the $C_2$-cofiniteness), if such a map exists, it necessarily must be an isomorphism since it specializes to an isomorphism---namely to \eqref{eq:fact}---when $t=0$.

 As we discussed earlier, \cref{thm:factorization} could be generalized beyond the rational case, by replacing $\bigoplus W \otimes W'$ with the mode transition algebra $\Ak(V)$. In order to obtain a general version of \cref{thm:sewing}, one would wish to find elements $\Ic_{d}$ such that the map
\[ M^\bullet \to M^\bullet \otimes \Ak[\![t]\!], \qquad m_\bullet \to m_\bullet \otimes \sum_{d \in \NN} \Ic_d t^d
\] induces a map between the appropriate spaces of coinvariants. In \cite{DGK2} it is shown that this is possible if and only if $\Ak(V)$ satisfies an explicit algebraic condition called \textit{strong unitality}. Explicitly, $\Ak(V)$ is strongly unital if, for every $d$, there exist elements $\Ic_d \in \Ak(V)_{d,d}$ such that $\Ic_d \star \mathfrak{a} = \mathfrak{a}$ and $\mathfrak{b} = \mathfrak{b} \star \Ic_d$ for every $\mathfrak{a} \in \Ak(V)_{d,e}$  and $\mathfrak{b} \in \Ak(V)_{e,d}$. If $V$ is rational, then $\Ak(V)$ is necessarily strongly unital: from the decomposition $\Ak =\bigoplus_{W \in \Ck_V}W \otimes W'$ one obtains $\Ak_{d,e} = \bigoplus_{W \in \Ck_V} \Hom(W(d), W(e))$ and the element $\Ic_d = \sum_{W \in \Ck_V} \text{Id}_{W(d)}$ satisfies the strong unitality condition described above.  

The above observations yield the following generalization of \cref{thm:sewing}.

\begin{thm} \label{thm:sewingMTA}
    Let $V$ be a VOA with strongly unital mode transition algebra $\Ak(V)$ and with finite dimensional spaces of coinvariants. Then there exists an element $\Ic(t) \in \Ac[\![t]\!]$ such that the map
    \[ M_\bullet \to M_\bullet \otimes \Ak(V)[\![t]\!], \qquad \bm{m}_\bullet \mapsto \bm{m}_\bullet \otimes \Ic(t)
    \] induces an isomorphism 
    \begin{equation} \label{eq:sewMTA} \VV (\Cc, \Pc_\bullet;M_\bullet) \cong  \VV\left(\widetilde{C}[\![t]\!]; P_\bullet, Q_\pm;M_\bullet, \Ak(V)\right).
\end{equation} 
\end{thm}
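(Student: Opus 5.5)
Following the template the paper gives for \cref{thm:sewing}, I would split the proof into three steps: construct the map, check it descends to coinvariants, and conclude it is an isomorphism by Nakayama's lemma using \eqref{eq:factMTA}.

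\textbf{Construction.} By strong unitality, for each $d\in\NN$ there is an element $\Ic_d\in\Ak(V)_{d,d}$ acting as a two-sided unit on $\Ak_{d,e}$ and $\Ak_{e,d}$. I set
\[\Ic(t)=\sum_{d\in\NN}\Ic_d\,t^d.\]
At $t=0$ this is $\Ic_0$, the element inducing \eqref{eq:factMTA}.

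\textbf{Descent.} Next I would show that $\bm{m}_\bullet\mapsto \bm{m}_\bullet\otimes\Ic(t)$ sends $\Lc_{\Cc\setminus\Pc_\bullet}(V)M_\bullet$ into $\Lc_{\Ct[\![t]\!]\setminus (P_\bullet,Q_\pm)}(V)\bigl(M_\bullet\otimes\Ak[\![t]\!]\bigr)$. Near the node, the smoothing $xy=t$ identifies sections of the chiral Lie algebra on $\Cc$ with pairs of Laurent expansions in $x$ and $y=t/x$. A mode $\bm{a}_{(n)}$ expanded at $Q_+$ corresponds to a $t$-power shift of a mode at $Q_-$, so a global section on $\Cc$ restricts to a section on $\Ct[\![t]\!]$ whose two local expansions are related by $x^k\leftrightarrow t^k y^{-k}$. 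The required compatibility is then the identity
\[
\bm{a}_{(n)}\cdot \Ic_d \;=\; \Ic_{d'}\cdot \bm{a}_{(n)}
\]
inside $\Ak$, with $d'=d+\deg$ of the mode, after twisting by the contragredient/inversion. This identity should follow from the unit property: $\bm{a}_{(n)}\Ic_d$ lies in some $\Ak_{e,d}$, so $\bm{a}_{(n)}\Ic_d=\bm{a}_{(n)}\Ic_d\star\Ic_d$, and by left unitality it also equals $\Ic_e\star(\cdots)$. Summing over $d$ with the weight $t^d$ shows that the sum is annihilated modulo the chiral Lie algebra. Conversely, \cite{DGK2} shows that strong unitality is exactly what this step needs.

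\textbf{Isomorphism.} Both sides of \eqref{eq:sewMTA} are finitely generated $\CC[\![t]\!]$-modules, by the hypothesis that coinvariants are finite dimensional, applied fiberwise together with completeness. The map is compatible with the specialization $t=0$, where it becomes \eqref{eq:factMTA}, an isomorphism. Nakayama's lemma then gives surjectivity. For injectivity, I would construct an inverse map in the same way, or compare the generic ranks on $\eta$ via propagation of vacua, \cref{thm:POV}, and the identification of both sides with coinvariants of a smooth family.

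\textbf{Main obstacle.} I expect the descent step to be the hardest: making the local comparison of the chiral Lie algebras over $\Cc$ and $\Ct[\![t]\!]$ precise near the node, and tracking how the modes of $\Vc_C\otimes\omega_C/\operatorname{im}\nabla$ act on the bimodule $\Ak$. Injectivity in the last step may also require care, since Nakayama's lemma alone only gives surjectivity.
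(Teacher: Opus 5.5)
Your outline matches the paper's argument. You take $\Ic(t)=\sum_d \Ic_d t^d$ from strong unitality. You treat descent to coinvariants as the content of \cite{DGK2}; the paper simply cites that equivalence. You then conclude from finite generation and the fact that at $t=0$ the map is \eqref{eq:factMTA}. The one genuine gap is the step you leave open, injectivity, where neither proposed fix works. You give no construction of an inverse, and none is evident: there is no natural map from $M_\bullet\otimes\Ak(V)[\![t]\!]$ back to $M_\bullet[\![t]\!]$. Comparing generic ranks is circular. The generic rank of $\VV(\Cc,\Pc_\bullet;M_\bullet)$ is not known independently, since this theorem is exactly the input for local freeness in \cref{cor:vb}. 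Moreover, the generic fibre of $\Cc$ still has $k-1$ nodes, so it is not a smooth family, and propagation of vacua says nothing about ranks.

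The missing observation is that the right-hand side of \eqref{eq:sewMTA} is a space of coinvariants of the \emph{trivial} family $\Ct\times\DD$. It is therefore $\VV(\Ct;P_\bullet,Q_\pm;M_\bullet,\Ak(V))[\![t]\!]$, a free $\CC[\![t]\!]$-module of finite rank. Once Nakayama gives surjectivity of your map $f$, the surjection splits, so $\ker f$ is a finitely generated direct summand. Formation of coinvariants commutes with restriction to $t=0$, since it is a cokernel; this is what ``specializes'' means. Reducing the splitting modulo $t$ then shows $\ker f/t\ker f=0$, because $f \bmod t$ is the isomorphism \eqref{eq:factMTA}. A second application of Nakayama gives $\ker f=0$. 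This is why the paper can say ``by Nakayama's Lemma'' and stop.

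A smaller point on your descent sketch: the identity $u\cdot\Ic_d=\Ic_e\cdot u$ (for homogeneous $u$, with $e$ the shifted index) does not follow from the two unit properties alone. You also need the compatibility $\mathfrak{a}\star(u\cdot\mathfrak{b})=(\mathfrak{a}\cdot u)\star\mathfrak{b}$ of the $\star$-product with the bimodule structure. With it, $u\cdot\Ic_d=\Ic_e\star(u\cdot\Ic_d)=(\Ic_e\cdot u)\star\Ic_d=\Ic_e\cdot u$.
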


\subsection{Connection}\label{sec:projconn} Using the Virasoro uniformization theorem, the action of the Virasoro algebra on $V$-modules induces a \textit{projectively flat connection with logarithmic singularities along the boundary} $\TDelta_{g,n}=\bTMgn \setminus \TMgn$. Recall that a flat connection on a sheaf $\VV$  over a space $X$ is the same as an action of the tangent bundle $\Tc_X$---seen as a sheaf of Lie algebras---on $\VV$. A flat connection with logarithmic singularities along a divisor $\Delta \subseteq X$ on $\VV$ is an action of $\Tc_X(-\Delta)$ on $\VV$. That is, not every vector field acts on $\VV$, but only those which are tangent to $\Delta$.  Finally, by a projectively flat connection with logarithmic singularities along $\Delta$ on $\VV$ we mean an action on $\VV$ not of $\Tc_X(-\Delta)$, but of a central extension $\Ac$ of $\Tc_X(-\Delta)$. Such extensions naturally arise from line bundles. 

Given a line bundle $\Lc$ on $X$ and an integer $\alpha$ one may define the sheaf $\alpha \Ac_{\Lc}$ as the sheaf of first order differential operators on the bundle $\Lc^{\otimes \alpha}$. This notion can naturally be extended to every $\alpha \in \CC$ (see \cite{tsuchimoto}) and yields an exact sequence
\[ 0 \longrightarrow \Oc_{X} \longrightarrow  \alpha \Ac_{\Lc} \longrightarrow \Tc_X \longrightarrow 0.
\]By restricting this sequence to $\Tc_X(-\Delta)$ one obtains the sheaf of Lie algebras $\alpha \Ac_{\Lc}(-\Delta)$ and the exact sequence
\begin{equation}\label{eq:atiyahLog}  0 \longrightarrow \Oc_{X} \longrightarrow  \alpha \Ac_{\Lc} (-\Delta) \longrightarrow \Tc_X(-\Delta) \longrightarrow 0.
\end{equation} 

\begin{prop} \label{prop:connectionTMgn} The sheaf $\VV_g(M_\bullet)$ on $\bTMgn$ is equipped with an action of the Atiyah algebra
\[ \dfrac{c_V}{2}\Ac_{\Lambda}(-\TDelta_{g,n}),
\]  where $c_V$ is the central charge of $V$ and where $\Lambda$ is the Hodge line bundle on $\bTMgn$. \end{prop}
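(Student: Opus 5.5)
The plan follows the Virasoro uniformization strategy of \cite{bzf,DGT1,DGT2}: construct the action on the stack $\btMgn$ of coordinatized curves and then descend it. Over $\btMgn$ there is a transitive infinitesimal action of $\bigoplus_{i=1}^n \Cl{t_i}\partial_{t_i}$. For a family $(\Cc\to S;\Pc_\bullet;t_\bullet)$ this is expressed by the exact sequence
\[ 0 \to \Tc_{\Cc/S}(\Cc\setminus \Pc_\bullet) \to \bigoplus_{i} \Oc_S\widehat{\otimes}\Cl{t_i}\partial_{t_i} \to \Tc_{\btMgn}(-\log \Delta) \to 0, \]
where the surjectivity onto vector fields tangent to the boundary is the content of the logarithmic Kodaira--Spencer isomorphism for nodal families (deformations preserving the nodes). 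Each $M_i$ carries an action of $\Vir$ with central charge $c_V$. I would let a lift $\ell=(\ell_i)$ of a vector field $\theta$ act on $\Oc_S\otimes M_\bullet$ by $\theta\otimes 1 + \sum_i T(\ell_i)$, with $T(-t^{p+1}\partial_t)=L_p$. This action is projective, with cocycle $c_V$ times the Virasoro cocycle.

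The first step is to check that the operator preserves the subsheaf $\Lc_{\Cc\setminus\Pc_\bullet}(V)M_\bullet$. This holds because the commutator of $T(\ell)$ with a mode coming from $\Vc_C\otimes\omega_C$ is again a global section: the Virasoro action is compatible with the coordinate-change action on $\Vc_C$, and $\ell$ varies the curve. The second step is to show that the subalgebra $\Tc_{\Cc/S}(\Cc\setminus\Pc_\bullet)$ acts on coinvariants through scalars. Its Virasoro lift is realized by elements of the chiral Lie algebra $\Lc_{\Cc\setminus\Pc_\bullet}(V)$ built from $\conf$, modulo a central correction coming from the Schwarzian (projective connection) ambiguity in globalizing $\conf$. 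These elements therefore act trivially on coinvariants, up to that central term. Together the two steps give an action of a central extension of $\Tc(-\log\Delta)$ by $\Oc$ on $\VV_g(M_\bullet)$.

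The third step identifies this extension. The extension obtained from $\bigoplus\Cl{t_i}\partial_{t_i}$ with the Virasoro cocycle, divided by $\Tc_{\Cc/S}(\Cc\setminus\Pc_\bullet)$, is the ``Virasoro uniformization'' of an Atiyah algebra. By the result of Beilinson--Schechtman and Arbarello--De Concini--Kac--Procesi, the Virasoro cocycle of central charge $1$ corresponds to $\tfrac12\Ac_\Lambda$, where $\Lambda$ is the Hodge bundle. This follows from the Mumford isomorphism $\lambda_2\cong\lambda_1^{13}$ together with the fact that the $bc$-system of weight $j$ has central charge $-2(6j^2-6j+1)$. Scaling by $c_V$ then gives $\tfrac{c_V}{2}\Ac_\Lambda(-\TDelta_{g,n})$ on $\btMgn$. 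To extend this to the boundary, I would reduce near nodes to the family $xy=t$ via \cref{thm:sewingMTA}/\cref{thm:sewing}, or argue directly with the nodal chiral Lie algebra of \cite{DGT1}, so that the logarithmic vector field $t\partial_t$ acts through $L_0$-type operators.

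Finally, the action must descend from $\btMgn$ to $\bTMgn$. The group $\mathrm{Aut}_+\CC[\![t]\!]$ of coordinate changes with trivial linear term is pro-unipotent. Its Lie algebra $t^2\CC[\![t]\!]\partial_t$ acts by the operators $L_p$ with $p\ge1$, which are locally nilpotent on the graded modules $M_i$ and integrate. The Atiyah algebra is equivariant, so it descends. I expect the main obstacle to be the third step, namely pinning down the central extension exactly as $\tfrac{c_V}{2}\Ac_\Lambda$ with logarithmic behavior along $\TDelta_{g,n}$. This is where the Mumford-type identification and the boundary analysis enter. I would cite \cite{DGT1} for the nodal extension rather than recompute the cocycle.
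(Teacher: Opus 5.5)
Your outline is correct and is the same Virasoro-uniformization argument the paper relies on, following \cite{bzf,DGT1}: the Virasoro action on the modules, the uniformization of $\Tc(-\log\TDelta_{g,n})$ by $\bigoplus_i\Cl{t_i}\partial_{t_i}$ modulo vertical vector fields, the identification of the resulting extension with $\tfrac{c_V}{2}\Ac_\Lambda$ via Beilinson--Schechtman/ADKP, and descent along the pro-unipotent $\mathrm{Aut}_+$. Two caveats: handle the boundary with the nodal uniformization sequence you already wrote down, as in \cite{DGT1}, not with \cref{thm:sewing} or \cref{thm:sewingMTA}, since those need rationality and $C_2$-cofiniteness or strong unitality, which the proposition does not assume; and the Mumford isomorphism is a consistency check on (indeed a consequence of) the Beilinson--Schechtman/ADKP normalization rather than a derivation of it.
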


Assume now that every $V$-module $M_i$ is such that the Virasoro element $L_0$ acts on its degree $d$-component $M_i(d)$ as multiplication by the scalar $d+{a}_i$ for a rational number ${a}_i \in \QQ$ ($a_i$ is called the conformal weight of $M_i$). This is the case if $M$ is a simple module for a $C_2$-cofinite VOA. As mentioned in \cref{sec:sheaf}, under this assumption, then the sheaf $\VV_g(M_\bullet)$ descends to $\bMgn$. Under these hypotheses one obtains a version of \cref{prop:connectionTMgn} over $\bMgn$.

\begin{prop} \label{prop:connectionMgn} \label{pp:connMgn} Under the above assumptions and notation, the sheaf $\VV_g(M_\bullet)$ on $\bMgn$ is equipped with an action of the Atiyah algebra
\[ \left(\dfrac{c_V}{2}\Ac_{\Lambda}+ \sum_{i=1}^n {a}_i\Ac_{\Psi_i}\right)(-\Delta_{g,n}),
\]  where $\Psi_i$ is the cotangent line bundle at the $i$-th marked point. \end{prop}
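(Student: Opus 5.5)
The plan is to deduce \cref{prop:connectionMgn} from \cref{prop:connectionTMgn} by descending along the forgetful map $p\colon \bTMgn \to \bMgn$. This map is a torsor under $\Gm^n$, which rescales the tangent vectors $\tau_i$. I will keep track of how the Atiyah algebra changes under the descent.

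First, I will set up the relative picture. Because the $\tau_i$ are nonzero tangent vectors, $\bTMgn$ is the complement of the zero sections in the total space of $\bigoplus_i \Psi_i^{\vee}$. The sequence of tangent sheaves then reads
\[
0 \to \mathfrak{t} \otimes \Oc \to \Tc_{\bTMgn}(-\TDelta_{g,n}) \to p^*\Tc_{\bMgn}(-\Delta_{g,n}) \to 0,
\]
where $\mathfrak{t}=\Lie(\Gm^n)$ and $\TDelta_{g,n}=p^{-1}\Delta_{g,n}$. The Hodge bundle on $\bTMgn$ is pulled back from $\bMgn$, so $\tfrac{c_V}{2}\Ac_\Lambda$ on $\bTMgn$ is the pullback extension of the one downstairs.

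Second, I will identify the action of $\mathfrak{t}$. In the Virasoro uniformization, the infinitesimal rescaling of the coordinate $t_i$ is the vector field $t_i\partial_{t_i}$, which corresponds to $-L_0$ acting on the $i$-th factor $M_i$. By hypothesis $L_0$ acts on $M_i(d)$ as $d+a_i$. The grading part $d$ is exactly the weight by which the $\Gm$-equivariant structure on $\VV_g(M_\bullet)$ is defined, and this equivariance is what allows descent; the rationality of $a_i$ is used only to make the torsor descent well defined after passing to a finite cover or root stack. What is left over is multiplication by the scalar $a_i$. So on the descended sheaf, the generator of the $i$-th $\Gm$ does not act by zero but by $a_i$. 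This is precisely the statement that $\VV_g(M_\bullet)$ is a twisted $\Gm^n$-equivariant sheaf, with twist character $(a_1,\dots,a_n)$.

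Third, I will convert this into an Atiyah algebra statement. For a $\Gm$-torsor $P=\Lc^\times$ and a constant $a$, consider $\Gm$-invariant differential operators upstairs modulo the relation ``Euler field $=a$''. By the standard construction (as in \cite{tsuchimoto}, and in the form used in \cite[Section 8]{DGT2}), this quotient is exactly $a\Ac_{\Lc}$. Applying this to each $\Psi_i$ and adding the pulled back $\tfrac{c_V}{2}\Ac_\Lambda$ gives an action of $\left(\tfrac{c_V}{2}\Ac_\Lambda+\sum_i a_i\Ac_{\Psi_i}\right)(-\Delta_{g,n})$. Here a sum of Atiyah algebras means the Baer sum of the extensions of $\Tc$. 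The logarithmic condition along $\Delta_{g,n}$ is inherited directly from $\TDelta_{g,n}$.

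I expect the main obstacle to be the sign and normalization bookkeeping in the second step. One has to check that the Euler field of the torsor $\Psi_i^{\times}$, as opposed to $(\Psi_i^\vee)^\times$, matches $-L_0$ rather than $+L_0$, so that the coefficient is $+a_i\Ac_{\Psi_i}$ and not its negative. One also has to check that the central term of the Virasoro action does not contribute: indeed $L_0$ does not meet the cocycle $\tfrac{p^3-p}{12}\delta_{p+q,0}$ nontrivially. I would verify the sign on the simplest case $g=0$, $n=2$ with $M_1=W$, $M_2=W'$, where $a_1=a_2$ and the answer can be computed by hand.
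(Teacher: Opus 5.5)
Your strategy is the one the paper relies on. The paper gives no argument beyond the pointer to \cite[Section 8]{DGT2}: descend along the $\Gm^n$-torsor $\bTMgn\to\bMgn$ using the integral grading $\deg=L_0-a_i$ as descent datum, and let the leftover scalar $a_i$ produce the $\Psi_i$-twist. The gap is the point you defer, namely the sign of that twist, which is the actual content of the statement. As written, your steps do not determine it, and the check you propose cannot. The case $g=0$, $n=2$ is outside the stable range. In any event $\Psi_1\otimes\Psi_2$ is trivial on $\mathcal{M}_{0,2}$, since the automorphisms $z\mapsto\lambda z$ act on the cotangent lines at $0$ and $\infty$ by inverse characters. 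So with $a_1=a_2$ both $a(\Ac_{\Psi_1}+\Ac_{\Psi_2})$ and its negative are trivial, and the test is blind to the sign. Moreover, the lemma in your third step has the wrong dual. For $P=\Lc^\times$ with its scaling action, invariant vector fields act on fibrewise-linear functions, i.e.\ on sections of $\Lc^\vee$, with the Euler field acting by $1$. Hence the quotient by ``Euler field $=a$'' is $a\Ac_{\Lc^\vee}=-a\Ac_{\Lc}$. Also, in your first step the torsor is $(\Psi_i^\vee)^\times$, not $\Psi_i^\times$, so ``applying this to each $\Psi_i$'' does not match your own setup.

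The sign is settled by a local computation instead:
On $\bTMgn=\prod_i(\Psi_i^\vee)^\times$, the functions of degree one in $\tau_i$ are the sections of $\Psi_i$. So invariant vector fields give $\Ac_{\Psi_i}$ with $E_i\mapsto 1$, where $E_i$ generates $\tau_i\mapsto\mu\tau_i$.
Rescaling $\tau_i$ by $\mu$ replaces a compatible coordinate $t_i$ by $\mu^{-1}t_i$. Hence $E_i$ is $-t_i\partial_{t_i}$, which with the convention of your second step acts by $+L_0=\deg+a_i$.
Take the descent datum $\mu^{\deg}$. The sign of the exponent is forced, since only then does its derivative differ from the $E_i$-action by a scalar. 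The residual scalar is then $+a_i$, and you get $+a_i\Ac_{\Psi_i}$ as stated.
As a check, a conformal block that is flat along the fibres, evaluated on degree-zero vectors, is homogeneous of degree $a_i$ in $\tau_i$. This is the usual transformation law of correlators of primary fields of weight $a_i$.

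Two smaller points. First, once you descend via $\deg$, no finite cover or root stack is needed, so rationality of $a_i$ plays no role in your argument. Second, you should say why $\mu^{\deg}$ preserves the image of the chiral Lie algebra. It differs from the formal operator $\mu^{L_0}$ by the scalar $\mu^{-a_i}$, so conjugation by either sends $Y^{M}(\bm{v},z)$ to $Y^{M}(\mu^{L_0}\bm{v},\mu z)$, which only involves the integral grading of $V$.
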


\subsection{Corollaries} \label{sec:cor} We collect in this section properties of conformal blocks that can be deduced as consequences of the properties described in the previous sections.  In \cref{sec:twCBprop} we summarize the properties of twisted conformal blocks.

\subsubsection{Vector bundles from coinvariants}  \label{sec:VB} Since coherent sheaves with a projectively flat connection are locally free, it follows that, whenever sheaves of coinvariants are coherent (e.g.  \cref{prop:coherence}), then they are actually locally free on the locus of smooth curves $\TMgn$ (or $\Mgn$ if in the sheaf descends as in the assumptions of \cref{prop:connectionMgn}). Since the connection on sheaves of coinvariants has logarithmic singularities along $\TDelta_{g,n}$ (resp. on $\Delta_{g,n}$), one cannot automatically extend this result to $\bTMgn$ (resp. to $\bMgn$). The sewing theorems from \cref{sec:sewing}, however, allow us to obtain local freeness on the whole $\bTMgn$ (resp. $\bMgn$) from local freeness on $\TMgn$ (resp. $\Mgn$). In particular we obtain the following result, which we state for sheaves over $\bTMgn$:

\begin{cor}\label{cor:vb} If the assumptions of \cref{thm:sewingMTA} are verified (that is $\Ak(V)$ is strongly unital and spaces of coinvariants are finite dimensional), then  $\VV_g(M_\bullet)$ is a vector bundle of finite rank on $\bTMgn$. In particular, if $V$ is $C_2$-cofinite and rational, then $\VV_g(M_\bullet)$ is a vector bundle of finite rank.
\end{cor}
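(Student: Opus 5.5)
The plan is to prove local freeness first on the open locus $\TMgn$ of smooth curves and then extend it across the boundary one stratum at a time, using the sewing theorem. Coherence comes first. The hypothesis that spaces of coinvariants are finite dimensional, together with the finite generation built into the construction (or \cref{prop:coherence} \ref{it:coh1} in the $C_2$-cofinite case), should give that $\VV_g(M_\bullet)$ is coherent on $\bTMgn$. We will use the stratification $\bTMgn^{(k)}$ by number of nodes, the analogue of the one recalled for $\bMgn$. On $\TMgn=\bTMgn^{(0)}$, \cref{prop:connectionTMgn} gives an action of the Atiyah algebra $\frac{c_V}{2}\Ac_\Lambda(-\TDelta_{g,n})$, which away from $\TDelta_{g,n}$ is a projectively flat connection. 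A coherent sheaf with a projectively flat connection is locally free: locally we can split the central extension, since $\alpha\Ac_\Lambda$ is the sheaf of differential operators on a local $\alpha$-th power of a trivialized line bundle. This produces an honest flat connection, and coherent $\mathcal{D}$-modules are locally free.

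Next we induct on $k$. Assume $\VV_g(M_\bullet)$ is locally free on $\bTMgn^{(k-1)}$ for all $(g,n)$ and all decorations, in particular for the partial normalizations with the extra points $Q_\pm$ decorated by $\Ak(V)$. Let $x$ be a curve with exactly $k$ nodes and choose a node $Q$. Since the fiber dimension is upper semicontinuous and the sheaf is coherent, it suffices to show the fiber dimension is locally constant near $x$. Equivalently, for a coherent sheaf on a reduced base, it suffices to check that the pullback to every smoothing disk $\DD\to\bTMgn^{(k)}$ through $x$ transverse to the boundary divisor of $Q$ is free. To do this, take the smoothing family $(\Cc,\Pc_\bullet)$ and apply \cref{thm:sewingMTA}: $\VV(\Cc,\Pc_\bullet;M_\bullet)\cong\VV(\Ct[\![t]\!];P_\bullet,Q_\pm;M_\bullet,\Ak(V))$. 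The right-hand side lives on a trivial family, which is constant in $t$, so it is the base change of a finite-dimensional vector space and hence free over $\CC[\![t]\!]$. Therefore the generic fiber over $\eta$, which has only $k-1$ nodes, and the special fiber at $x$ have the same dimension, namely $\dim\VV(\Ct;P_\bullet,Q_\pm;M_\bullet,\Ak(V))$. Combining this with the inductive hypothesis on the open stratum and with semicontinuity gives constancy of rank in a neighborhood of $x$, hence local freeness on $\bTMgn^{(k)}$.

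The rational $C_2$-cofinite case then follows immediately. By \cref{prop:coherence} \ref{it:coh1} the coinvariants are finite dimensional, and $\Ak(V)$ is strongly unital with $\Ic_d=\sum_W \id_{W(d)}$, as observed before \cref{thm:sewingMTA}; alternatively one can use \cref{thm:sewing} directly.

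The main obstacle is the passage from the formal disk to a genuine neighborhood. \cref{thm:sewingMTA} is formulated over $\Spec\CC[\![t]\!]$ and only along one-parameter smoothings, whereas local freeness needs control over a full neighborhood in which the other $k-1$ nodes are also allowed to smooth. I would handle this by working over the complete local ring of $\bTMgn$ at $x$, which is $\CC[\![t_1,\dots,t_k,s_\bullet]\!]$, and sewing all $k$ nodes at once. Iterating \cref{thm:sewingMTA} identifies the completed stalk with coinvariants on the normalization, decorated by copies of $\Ak(V)$, over a trivial family; these are free because the smooth normalized family is locally free by the base case. Faithful flatness of completion then gives freeness of the stalk. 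Checking that the sewing maps for different nodes commute and descend to coinvariants in the multi-parameter setting is where the real technical work lies.
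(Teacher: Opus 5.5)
Your proposal is correct and follows the paper's route. The paper uses three ingredients: coherence (which, as in the paper, is effectively part of the hypothesis rather than a consequence of finite-dimensional fibers alone), local freeness on $\TMgn$ from the projectively flat connection of \cref{prop:connectionTMgn}, and induction on the number of nodes via the one-node smoothing family and \cref{thm:sewingMTA}, with $\Ic_d=\sum_W \id_{W(d)}$ in the rational case.

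The simultaneous multi-node sewing in your last paragraph is not needed. Your one-parameter argument already shows that every point with exactly $k$ nodes has fiber dimension equal to the constant rank on the irreducible open $\bTMgn^{(k-1)}$, and a coherent sheaf with constant fiber dimension on the reduced stack $\bTMgn^{(k)}$ is locally free.
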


An open question is to determine when coinvariants are locally free beyond the assumptions of \cref{cor:vb}. In particular, in \cite{DGK2} it is shown that the mode transition algebra of certain logarithmic VOAs are not strongly unital, so that \cref{thm:sewingMTA} cannot be used. 

\pp Assume now that $V$ is \textit{strongly rational}, i.e. $V$ is a self-contragredient $C_2$-cofinite and rational VOA. Without loss of generality assume that the modules $M_\bullet$ are simple. Then one can iteratively use \cref{thm:factorization} to reduce the computation of $\dim \VV(C;P_\bullet;M_\bullet)$ to that of the \textit{fusion rules}, i.e. to $\dim \VV(\PP^1;0, 1, \infty; A, B,C)$ for every triple $(A, B, C)$ of simple $V$-modules.  Moreover \cite{zhu.global} shows that  
\[\VV(\PP^1;P;M) = \CC \delta_{M=V} \qquad \VV(\PP^1; P_1, P_2; M, W) = \Hom_V(M,W') =\CC \delta_{M=W'}.\] 
\newcommand{\rk}{\operatorname{rk}}
\begin{ex} When $V$ is strongly rational we can readily compute the rank of $\VV_1(V)$:
\begin{align*}
    \rk \VV_1(V) &=  \sum_{W \in \Ck_V}  \rk  \VV_0(V,W,W') = \sum_{W \in \Ck_V} \dim \VV(\PP^1; 1,0,\infty; V,W,W')\\
    &= \sum_{W \in \Ck_V} \dim \VV(\PP^1; 0,\infty; W,W') = \sum_{W \in \Ck_V} \dim \CC = |\Ck_V|.
\end{align*}
\end{ex}

\subsubsection{Chern Character} \label{sec:chern} When $V$ is strongly rational, an application of the reconstruction theorem for \textit{semisimple} Cohomological Field Theories \cite{teleman2012structure} shows that it is possible to recover the Chern character of $\VV_g(M_\bullet)$ over $\bMgn$ from its first Chern class and its rank \cite{MOPPZ, DGT3}.  As in \cref{pp:connMgn}, denote by $a_i$ the conformal weight of $M_i$. Recall that the boundary divisor $\Delta_{g,n}$ is a union $\Delta_{\rm irr} \cup \bigcup_{i,I}\Delta_{i:I}$, where the union is over $i,I$ such that $i\in\{0,\dots,g\}$ and $I\subseteq \{1, \dots, n\}$, modulo the relation $(i,I)\equiv(g-i,I^c)$. Following standard conventions, we set $\lambda = c_1 \Lambda$, $\psi_i = c_1 \Psi_i$ and $\delta_* = c_1 \Delta_*$.
Then one has the expression
\begin{equation} \label{eq:c1}
c_1\left(  \VV_g(M_1, \dots, M_n)  \right) =  {\rk} \VV_g(M_\bullet)\left( \frac{c_V}{2}\lambda + \sum_{i=1}^n a_i \psi_i \right)          
-b_{\rm irr} \delta_{\rm irr} - \sum_{i,I} b_{i:I} \delta_{i:I}
\end{equation}
where                 
\begin{align*} 
b_{\rm irr} & =\sum_{W\in \Ck_V} a_W \cdot{\rk} \VV_{g-1}\left(M
_{\bullet}, W ,W'\right) \\               
b_{i:I} &=\sum_{W\in \Ck_V} a_W \cdot {\rk}\VV_{i}\left(M_I, W \right)\cdot {\rk}\VV_{g-i} \left(M_{I^c}, W' \right). 
\end{align*}
When $V=L_\ell(\g)$, the expression given in \eqref{eq:c1} can be found in \cite{Fakhruddin, mukhopadhyay:ranklevel}. For a strongly rational VOA, \eqref{eq:c1} is derived in \cite{DGT3}.

\medskip 
Since the fusion rules are determined for certain classes of VOAs, it is therefore possible to explicitly compute $\DD_g(M_\bullet)\coloneqq c_1 \VV_g(M_\bullet)$ in many instances. From \cref{prop:coherence}\ref{it:coh2}, we can deduce that necessarily $\DD_0(M_\bullet)$ is nef if $V$ is generated in degree one. Starting with the expression \eqref{eq:c1}, it is possible to check positivity properties of $\DD_g(M_\bullet)$ even without knowing whether the sheaf of coinvariants $\VV_g(M_\bullet)$ they depend on is globally generated. In this direction, we point out recent works of Chakravarty \cite{avik} and Choi \cite{choi:discrete}. In particular, \cite{choi:discrete} shows that $-\DD_g(M_\bullet)$ (note the sign!) is nef for every $g$ whenever $V= Vir_{2k+1,2}$, a \textit{discrete series} VOA arising from the Virasoro Lie algebra $\Vir$. This could suggest that, for these types of VOAs, the associated sheaf of conformal blocks is globally generated (in contrast with the case $V=L_\ell(\g)$ where it is the sheaf of coinvariants to be globally generated over $\bM_{0,n})$. 

See also \cite{Fakhruddin,GG:12,giansiracusa:13,AGS:14,BGM:15,mukhopadhyay:ranklevel, BGM:16} for further properties of $\DD_g(M_\bullet)$ in the case $V=L_\ell(\g)$. 

\subsubsection{Modular functor and tensor product} \label{sec:MF} When $V$ is strongly rational, one can also deduce structural properties of the category $\Vmod$ of $V$-modules using conformal blocks. Indeed, in \cite{damiolini.woike} Woike and the author show the following theorem.

\begin{thm} \label{thm:MFS} Coinvariants induce on $\Vmod$ the structure of a modular fusion category. \end{thm}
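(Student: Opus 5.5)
The plan is to construct a \emph{modular functor} from the sheaves of coinvariants, and then appeal to the known equivalence between (genus-zero parts of) modular functors and modular fusion categories. Concretely, for strongly rational $V$ we assign to each stable pointed curve decorated by $V$-modules the vector bundle $\VV_g(M_\bullet)$ over $\bMgn$ given by \cref{cor:vb}. Its restriction to $\Mgn$ carries the projectively flat connection of \cref{prop:connectionMgn}. Taking flat sections over the (logarithmic) real blow-up of $\bMgn$ along the boundary then gives a projective representation of mapping class groupoids of surfaces, functorial in $M_\bullet$. In this way $\VV_g$ becomes a functor on a surface operad (or modular operad) with values in finite-dimensional vector spaces, up to the projective anomaly controlled by $\tfrac{c_V}{2}\lambda$.

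Next I would verify the modular functor axioms. Gluing corresponds to the factorization isomorphism \cref{thm:factorization}, made compatible with the connection through the sewing theorem \cref{thm:sewing}. Sewing identifies the bundle near a boundary stratum with $\bigoplus_W \VV(\Ct;\dots,W,W')$, and the factor $t^{L_0}$ shows that parallel transport around the boundary acts on the $W$-summand by $e^{2\pi i a_W}$. This is exactly the twist compatibility. The vacuum axiom comes from propagation of vacua \cref{thm:POV}. The unit and duality axioms come from the computations $\VV(\PP^1;P;M)=\CC\delta_{M=V}$ and $\VV(\PP^1;P_1,P_2;M,W)=\CC\delta_{M=W'}$. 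Finiteness and semisimplicity of $\Vmod$ are supplied by rationality and $C_2$-cofiniteness. Compatibility of the gluing isomorphisms under associativity (sewing two nodes in either order) follows because both orders agree on the doubly nodal fiber, and the isomorphisms are determined there by the explicit maps \eqref{eq:mapsew}.

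Finally, from the genus-zero part I would extract the monoidal structure by setting
\[ M\otimes N := \bigoplus_{W\in\Ck_V} \VV(\PP^1;0,1,\infty;M,N,W')^{\vee}\otimes W. \]
Associativity constraints come from parallel transport and gluing on $\bM_{0,4}$, and braidings come from the half-monodromy exchanging points. Rigidity follows from the two-point computation. The genus-one data, together with nondegeneracy of the gluing pairing, give modularity: the $S$-matrix is nondegenerate because the modular functor gluing is an isomorphism onto a direct sum over $\Ck_V$. To pass from the modular functor to the modular fusion category, I would invoke the Bakalov--Kirillov style equivalence, in its refined form for cyclic or modular algebras over the surface operad.

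The main obstacle is controlling the projectivity of the connection so that one gets a genuine or suitably extended modular functor with coherent gluing and no hidden anomalies. The approach I would try first is to twist by a power of the Hodge bundle, or to pass to an extension of the mapping class groupoid, in order to absorb the central charge term $\tfrac{c_V}{2}\Ac_\Lambda$. I would then check that the twisting is compatible with the clutching maps, using the fact that $\Lambda$ pulls back to $\Lambda\boxtimes\Lambda$ (respectively $\Lambda$) under clutching.
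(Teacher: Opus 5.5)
Your architecture is the paper's: build a modular functor from the sheaves of coinvariants (vacuum from \cref{thm:POV}, gluing from \cref{thm:factorization} and \cref{thm:sewing}, with the real content being that these isomorphisms intertwine the Atiyah algebra actions), define $\cbotimes$ from three-pointed genus-zero coinvariants with the associator from $\bM_{0,4}$, and then pass from the modular functor to a modular fusion category.

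\textbf{The gap: rigidity and modularity.} The two-point computation only gives natural isomorphisms $\Hom_V(M\cbotimes W,V)\cong\Hom_V(M,W')$. That is a \emph{weak} (Grothendieck--Verdier type) duality with dualizing object $V$; in Bakalov--Kirillov's framework, genus-zero data is matched only with \emph{weakly} rigid categories. Rigidity also requires the induced evaluation and coevaluation to satisfy the zig-zag identities. That condition mixes the duality with the associator, and two-point data cannot see it.

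Likewise, the gluing isomorphism $\VV_1(V)\cong\bigoplus_{W\in\Ck_V}\CC$ does not by itself make the categorical $S$-matrix nondegenerate. That matrix is defined through braiding, duals and traces. So you first need rigidity, and then an identification of it with the (invertible) mapping class action on torus blocks.

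In the paper's approach, rigidity and nondegeneracy are \emph{outputs} of the theorem that a modular functor with finite semisimple values yields a modular fusion category. That theorem genuinely uses the higher-genus gluing. So you should drop your direct arguments and delegate this step entirely to that equivalence — the refined version you allude to at the end.

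\textbf{Working on $\bTMgn$, not $\bMgn$.} The paper stresses that \cref{sec:MF} is exactly where one may \emph{not} pretend the sheaves live on $\bMgn$: the modular functor is a system of $\frac{c_V}{2}\Ac_\Lambda$-modules on $\bTMgn$. Tangent vectors at $Q_\pm$ make the smoothing parameter $t$ in $xy=t$ well defined, and hence the clutching and \eqref{eq:mapsew}. The twist $e^{2\pi i a_W}$ is the monodromy of rotating a tangent vector.

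On $\bMgn$ you would carry the extra $\sum_i a_i\Ac_{\Psi_i}$ of \cref{prop:connectionMgn}, which you never account for. A real blow-up along $\Delta_{g,n}$ alone does not record boundary parametrizations at the marked points, so it does not yet give a functor on surfaces with parametrized boundary. Also, $\frac{c_V}{2}$ is in general not an integer, so the anomaly cannot simply be removed by tensoring with a power of $\Lambda$; it should stay in the Atiyah algebra.

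\textbf{Variance of the tensor product.} Your formula has the wrong variance. $\VV(\PP^1;0,1,\infty;M,N,W')^\vee$ is a space of conformal blocks, i.e.\ $\Hom_V(M\cbotimes N,W)$, which is contravariant in $M$ and $N$. As in \eqref{eq:cbotimes}, you must use the coinvariants themselves.
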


In particular, this implies that $\Vmod$ has a monoidal structure which satisfies strong compatibilities (e.g. \textit{rigidity}). The tensor product on $\Vmod$ is given by
\begin{equation} \label{eq:cbotimes} M \cbotimes N \coloneqq \bigoplus_{S \in \Ck_V} \VV_0(M, N, S')\otimes_\CC S,
\end{equation} while the associator is induced by an enhanced version of the factorization theorem applied to a 4-pointed sphere.

\pp The main ingredient to show this theorem is the fact that coinvariants for strongly rational VOAs define a \textit{modular functor} \cite{damiolini.woike}. Although this concept is more naturally formulated in a topological framework, from an algebraic geometric perspective, a modular functor can be interpreted as a collection of $\frac{c_V}{2}\Ac_\Lambda$-modules on $\bTMgn$ which satisfy explicit compatibilities with respect to the pullbacks along tautological maps  \cite{BFM, baki}. Oversimplifying, these compatibilities require that the isomorphisms described in \cref{thm:POV} and \cref{thm:sewing} are compatible with action of the Atiyah algebra $\frac{c_V}{2}\Ac_\Lambda$. 

Another important consequence of the fact that spaces of coinvariants define a modular functor is the fact that their dimensions can be computed using a categorical Verlinde formula. We refer to \cite{damiolini.woike,baki, deshpande.mukhopadhyay:2019} for further details.

\begin{rmk}
    \begin{enumerate}[label=(\alph*)]
        \item The fact that coinvariants from VOAs give rise to a modular functor was stated as an expectation by Ben-Zvi and Frenkel in \cite[Section~10.1.4]{bzf}. When $V=L_\ell(\mathfrak{g})$ this was indeed proved in \cite{baki}, who also conjecture that a similar result should hold for a general rational VOA.
        \item The fact that on $\Vmod$ one can define the structure of a modular fusion category has also been shown by Huang and Lepowsky \cite{HL:I,HL:II,HL:III,HL:IV,huang:2005:verlinde,huang:rigidity}, where a direct proof of the Verlinde formula is given. The monoidal structure that they define is given via explicit analytic methods and, although very likely, it is not known whether it coincides with the one obtained via coinvariants. 
        \item  When $V$ is a logarithmic VOA (in particular not rational) \cite{HLZ} show, using analytic methods, that the category $\Vmod$ is a ribbon Grothendieck--Verdier category. In particular, $\Vmod$ is equipped with a monoidal structure which allows one to define the tensor product of two $V$-modules. It would be interesting to understand whether there is an algebro-geometric realization of such a monoidal structure resembling \eqref{eq:cbotimes}.
        \item The tensor category structure on $\Vmod$ described  here has a parallel in the representation theory of quantum groups at  roots of unity \cite{kazhdan.lusztig:I, kazhdan.lusztig:II,  kazhdan.lusztig:III, kazhdan.lusztig:IV, finkelberg,finkelberg:erratum}, where an equivalence with categories of representations of affine Lie algebras  was established; we do not pursue this direction further here.
  \end{enumerate}
\end{rmk}

\subsubsection{Properties of twisted conformal blocks} \label{sec:twCBprop}  We conclude this section summarizing the properties of twisted coinvariants and conformal blocks. For further details, we refer the reader to \cite{frenkelSzczesny:twusted, szczesny:connection, damiolini:2020:conformal,hongkumar:2023,deshpande.mukhopadhyay:2019, hongkumar:2024}. As in \cref{sec:sheaf}, spaces of twisted coinvariants fit together to define a sheaf on $\Mgn^\Gamma$, the moduli space of $n$-pointed $\Gamma$-covers of smooth curves\footnote{As in the untwisted setting, also here one needs to be careful with the dependence of coordinates. We refer to \cite{frenkelSzczesny:twusted,deshpande.mukhopadhyay:2019} for more detail on this matter.} As described in \cite{szczesny:connection}, a $\Gamma$-equivariant version of the Virasoro uniformization equips such sheaves on $\Mgn^\Gamma$ with a projectively flat connection. 

When $V=L_\ell(\g)$, we have already mentioned in \cref{rmk:twCBrmk} that it is possible to extend the notion of twisted coinvariants to allow also admissible $\Gamma$-covers, i.e. $\Gamma$-covers of possibly nodal curves which satisfy prescribed stability conditions (see \cite[Appendix A]{deshpande.mukhopadhyay:2019}). In this situation, most of the known properties of coinvariants extend to the twisted setting, as illustrated in \cref{thm:twCBproperties}. To avoid technicalities, we will assume that $\Gamma$ acts on $\g$ via diagram automorphisms only.

\begin{thm}\label{thm:twCBproperties} 
\begin{enumerate}[label=(\alph*)] 
\item \cite{damiolini:2020:conformal,hongkumar:2023} The sheaf $\VV_g^\Gamma(M_\bullet)$ of twisted coinvariants is a vector bundle of finite rank over $\bMgn^\Gamma$ which is equipped with a projectively flat connection with logarithmic singularities along the boundary $\Delta_{g,n}^\Gamma$ consisting of $\Gamma$-covers of nodal curves. Propagation of vacua \cref{thm:POV}, Factorization \cref{thm:factorization} and Sewing \cref{thm:sewing} hold true \textit{mutatis mutandis} also for twisted coinvariants. 
\item \cite{deshpande.mukhopadhyay:2019} Spaces of twisted coinvariants define a $\Gamma$-twisted modular functor and their dimension is computed with a $\Gamma$-twisted Verlinde formula. \end{enumerate}
\end{thm}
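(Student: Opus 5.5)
The plan is to transport the untwisted arguments of \cref{sec:properties} to the equivariant setting, working throughout with the Lie algebra $\g^\Gamma_{C,P_\bullet}$ of $\Gamma$-invariant sections of $\g\otimes\Ho^0(X\setminus\pi^{-1}P_\bullet,\Oc_X)$ rather than the chiral Lie algebra. The steps, in order, are:
\begin{enumerate}[label=(\roman*)]
\item Construct the sheaf of twisted coinvariants over the stack of admissible $\Gamma$-covers.
\item Prove propagation of vacua.
\item Prove coherence.
\item Construct the connection.
\item Prove factorization and sewing.
\item Deduce local freeness.
\item Deduce the modular functor structure and the Verlinde formula.
\end{enumerate}

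For part (a), I would start with propagation of vacua as in \cref{thm:POV}. For an unramified point $Q$ one attaches the untwisted vacuum module, and the proof of \cite{tuy} applies after restricting to $\Gamma$-invariants. This is because the map $\g_{C,P_\bullet,Q}^\Gamma\to \g\otimes\Cl{t_Q}$ still has image plus $\g\otimes\CC[\![t_Q]\!]$ equal to everything, by Riemann--Roch on $X$ averaged over $\Gamma$. This lets us define the sheaf on all of $\bMgn^\Gamma$ for admissible covers.

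Coherence comes next. Since twisted integrable modules are generated by their finite-dimensional top spaces, and $\g$-valued $\Gamma$-invariant functions with large pole order act on each $\Hc$, a filtration argument as in \cite{tuy} bounds the coinvariants by a finite-dimensional quotient.

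For the connection, I would use the $\Gamma$-equivariant Virasoro uniformization of \cite{szczesny:connection}. Concretely, the twisted Sugawara construction lets $\Cl{t^{1/m}}\partial_t$ act, which yields the projectively flat connection on the smooth locus. The logarithmic behavior at the boundary is read off from the local equation $xy=t$, where only vector fields preserving the node act.

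Next come factorization and sewing at a node of an admissible cover. The node in $X$ has stabilizer $\Gamma_Q$ acting on the two branches by inverse characters. Hence the sum in \eqref{eq:fact} should run over simple $\Gamma_Q$-twisted modules $W$ paired with $W'$, which are twisted by the inverse generator, and then be induced from $\Gamma_Q$ to $\Gamma$. I would prove injectivity and surjectivity as in \cite{tuy}: identify coinvariants of the nodal cover with coinvariants of the normalization tensored with a $\g^{\Gamma_Q}$-bimodule, then use Peter--Weyl type decomposition of the integrable bimodule for the twisted affine algebra $\gh^\sigma$ (Kac's classification, with $\sigma$ a diagram automorphism). Sewing then follows exactly as explained after \cref{thm:sewing}: define the map $\bm{m}_\bullet\mapsto \bm{m}_\bullet\otimes\sum_d \id_{W(d)}t^d$, check that it kills the image of the global Lie algebra, and conclude by Nakayama using coherence. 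Local freeness on $\bMgn^\Gamma$ follows as in \cref{cor:vb}.

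For part (b), I would verify the modular functor axioms of \cite{baki} equivariantly: compatibility of propagation of vacua and sewing with the Atiyah algebra action and with gluing of covers. The Verlinde formula then follows from the twisted fusion ring being a based commutative ring, diagonalized by twisted $S$-matrices.

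I expect the main obstacle to be factorization for twisted covers of nodal curves. The difficulty lies in defining the global Lie algebra correctly near a ramified node and in proving the bimodule decomposition for the twisted affine algebra at positive level. The boundary is also subtle, because of ghost automorphisms of admissible covers.
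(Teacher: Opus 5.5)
\textbf{Part (a).} You follow the Tsuchiya--Ueno--Yamada route that the works cited for (a) (Damiolini; Hong--Kumar) carry out with $\Gamma$-invariant $\g$-valued functions; the paper itself only cites them. So the overall architecture is right, but three steps are transcribed from the untwisted case and need correcting.
\begin{enumerate}[label=(\arabic*)]
\item Since $\conf$ is fixed by any VOA automorphism, the twisted Sugawara operators $L_n$ are integer-moded. What acts on a $\sigma$-twisted module is therefore (a central extension of) $\Cl{t}\partial_t$, with $t$ the coordinate on $C$; equivalently, the $\Gamma_x$-invariant part of $\CC(\!(z)\!)\partial_z$ with $z^m=t$. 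It is not $\Cl{t^{1/m}}\partial_t$.
\item Correspondingly, the $L_0$-grading of a twisted module relative to $t$ lies in a shift of $\frac{1}{m}\NN$, so the untwisted series $\sum_d \id_{W(d)}t^d$ is not defined over $\CC[\![t]\!]$. At a node of $X$ whose stabilizer has order $m$, the equivariant smoothing is $xy=s$, and downstairs $t=s^m$. Sewing must be formulated over $\CC[\![s]\!]$, the boundary coordinate of $\bMgn^\Gamma$, and the logarithmic singularity is measured there too.
\item The approximation property you state for propagation of vacua only gives surjectivity. The substantive half is injectivity: extending a twisted conformal block on $M_\bullet$ to $M_\bullet\otimes L_\ell(\g)$. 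The extension to the Weyl module is formal. Its vanishing on the submodule generated by $(X_\theta t_Q^{-1})^{\ell+1}\vac$ is where integrability of the twisted modules at the $P_i$ enters, and that argument must be redone with $\Gamma$-invariant sections.
\end{enumerate}
Also, no induction from $\Gamma_Q$ to $\Gamma$ is needed at a node. Once points $q_\pm$ over $Q_\pm$ are fixed, the sum runs directly over level-$\ell$ integrable $\gh^{\sigma_q}$-modules $W$, with $W'$ the dual $\gh^{\sigma_q^{-1}}$-module.

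\textbf{Part (b): the genuine gap.} The gap is the last step, the deduction of the Verlinde formula. Twisted coinvariants produce a $\Gamma$-graded category $\mathcal{C}=\bigoplus_{g\in\Gamma}\mathcal{C}_g$, with $\mathcal{C}_g$ the level-$\ell$ integrable $g$-twisted modules. This category is only $\Gamma$-crossed braided: $A\otimes B\cong {}^{g}B\otimes A$ for $A\in\mathcal{C}_g$. Its Grothendieck ring is therefore not a commutative based ring. For non-commuting $g,h$ (e.g.\ $\Gamma=S_3$ acting on $\mathfrak{so}_8$ by diagram automorphisms), $[A][B]$ and $[B][A]$ lie in the different nonzero summands $K(\mathcal{C}_{gh})$ and $K(\mathcal{C}_{hg})$. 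There is no single $S$-matrix that diagonalizes this ring.

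The cited work of Deshpande--Mukhopadhyay supplies three things your plan lacks:
\begin{enumerate}[label=(\roman*)]
\item The notion of a $\Gamma$-crossed modular functor, and a proof that twisted coinvariants form one. It must encode the $\Gamma$-action on labels: replacing the chosen point $x$ by $\gamma x$ turns $\sigma$-twisted modules into $\gamma\sigma\gamma^{-1}$-twisted ones. It must also encode gluing along nodes with arbitrary monodromy.
\item A categorical Verlinde formula for $\Gamma$-crossed modular fusion categories. This is a new theorem in that work. It expresses fusion coefficients through crossed $S$-matrices $S^g$ pairing the simples of $\mathcal{C}_g$ with the $g$-invariant simples of $\mathcal{C}_1$.
\item An explicit computation of these crossed $S$-matrices, without which the formula is not computable.
\end{enumerate}
Checking the Bakalov--Kirillov axioms equivariantly and then invoking the untwisted Verlinde mechanism yields none of these.
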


It is an open question to  extend the notion of twisted conformal blocks (from a general VOA) to the whole $\TMgn^\Gamma$ in such a way that, under appropriate assumptions, \cref{thm:twCBproperties} holds true. Furthermore, although the Atiyah algebra acting on $\VV_g^\Gamma(M_\bullet)$ is explicitly computed \cite{deshpande.mukhopadhyay:2019}, a closed formula that computes the Chern classes of $\VV_g^\Gamma(M_\bullet)$ is unknown.

\section{Relation to principal bundles}  \label{sec:BunG}
We now abandon the general case of conformal blocks from VOAs to focus only on the case of (twisted) conformal blocks arising from (twisted) modules for $L_\ell(\g)$. In this situation, in fact, one can show that spaces of (twisted) conformal blocks describe global sections of appropriate line bundles on moduli of principal bundles over a curve \cite{beauville.laszlo:1994:conformal,laszlo.sorger:1997,hongkumar:2023,damiolini.hong.gao:2025}. This geometric realization also holds true for conformal blocks arising from certain lattice VOAs \cite{ueno:CFT}, but it is not know if conformal blocks arising from other VOAs admit such a description.

We begin describing the moduli stack $\Bun_\Gc$, which parametrizes principal $\Gc$-bundle over a smooth projective curve and where $\Gc$ is a parahoric Bruhat--Tits group scheme. In \cref{sec:PicBunG,sec:tCBforBunG} we (conjecturally) describe its Picard group and its relation with (twisted) conformal blocks. Throughout we fix a smooth curve $C$ over $\CC$. 

\subsection{Parahoric Bruhat--Tits group schemes} \label{sec:PBT}  For a reductive group $G$ over $\CC$, we will denote by $\Bun_G$ the stack of principal $G$-bundles over $C$. 

\begin{ex} A good example to keep in mind is that case $G=\SL_r$: in this case $\Bun_{\SL_r}$ can be interpreted as the stack parameterizing vector bundles over $C$ of rank $r$ and with trivial determinant.\end{ex}

In more recent years, more attention has been devoted towards the study of principal bundles under group schemes $\Gc$ defined over the curve $C$ itself and which are not necessarily pullbacks from a group $G$ over $\CC$. We can then consider the stack $\Bun_\Gc$ of $\Gc$-bundles over $C$.

\begin{ex} \begin{enumerate}
    \item For every vector bundle $E$ of rank $r$ over $C$, one can define $\Gc_E$ to be the group of automorphisms of $E$. Since $E$ is locally free, $\Gc_E$ is locally isomorphic to $\GL_r$, but unless $E$ is the trivial vector bundle, then $\Gc_E$ will not be isomorphic to $\GL_r \times C$.
    \item Let $G$ be a reductive group and $B$ a Borel subgroup of $G$. Let us recall that a parabolic $G$-bundle with parabolic structure at points $P_1, \dots, P_n$ is a principal $G$-bundle $\Ec$ over $C$ together with sections $\sigma_i \in \Ec(G/B)(P_i)$. Such bundles can be interpreted as a principal bundle over $C$ under an appropriate group scheme $\Gc_{B,P_\bullet}$ whose generic fiber is $G$, and whose fiber over each $P_i$ is the normal cone of $B$ inside $G$. A similar description occurs choosing, for every point $P_i$, a parabolic subgroup $H_{i}$ of $G$ (which is not necessarily equal to $B$), leading to the group scheme $\Gc_{H_\bullet, P_\bullet}$.
    \item Let $\Gamma$ be a finite group and $\pi \colon X \to C$ be a possibly ramified $\Gamma$-cover. Let $G$ be a smooth group scheme over $\CC$ which is equipped with an action of $\Gamma$. To this datum we can associate the group scheme over $C$ given by the formula
    \[\Gc \coloneqq (\pi_*(G \times X) )^\Gamma.\] This means that for every $C$-scheme $T$, the $T$-points of $\Gc$ are the $\Gamma$ invariant elements of the group $G(T \times_C X)$ (see \cite{neron.models}).  We call such groups \textit{twisted groups}. Note that we can further generalize this example by substituting the constant group scheme $G \times X$ with any smooth group scheme over $X$ equipped with an action of $\Gamma$ lifting the action on $C$.
\end{enumerate}
\end{ex}

\pp Here we will be concerned only with \emph{parahoric Bruhat--Tits} group schemes, which can be seen as a natural generalizations of the examples presented above \cite{heinloth:2010:uniformization, pappas.rapoport:2008:haines}. In order to define what these groups are, we need to set up some notation and we will make some simplifying assumptions. 
If $P$ is a point of $C$, we denote by $\Db_P = \Spec(\Oo_P)$ the formal disk around $P$ and $\Db_P^\times= \Spec(\Kc_P)$ the punctured disk around $P$. By choosing a coordinate $t$ around $P$, we have that $\Oo_P \cong k[\![t]\!]$ and $\Kc_P \cong k(\!(t)\!)$. We will consider the following functors: \begin{itemize}
    \item $\Ls_P\Gc$, given by $\Ls_P\Gc(\Spec(R)) = \Gc(\Db_{P,R}^\times)= \Gc(R(\!(t)\!))$;
        \item $\Ls^+_P\Gc$, given by $\Ls^+_P\Gc(\Spec(R)) = \Gc(\Db_{P,R}) = \Gc(R[\![t]\!])$;
        \item $\Ls_{C\setminus P}\Gc$, given by $\Ls_{C\setminus P} \Gc_P(\Spec(R)) = \Gc((C\setminus P) \times \Spec(R))$.
\end{itemize} Under mild assumptions on $\Gc$, the quotient $\Ls_P\Gc/\Ls^+_P\Gc$ is representable by an ind-scheme called the (twisted) affine flag variety associated with $\Gc$ at $P$ and denoted $\Gr_{\Gc,P}$.

Throughout this note, by a \emph{parahoric Bruhat--Tits group scheme} over $C$ we mean an affine and smooth group scheme $\Gc$ over $C$ such that
\begin{enumerate}[label=(\roman*)]
    \item \label{it:gensimple} $\Gc$ is a generically simply-connected and simple algebraic group;
    \item $\Gc|_P$ is connected for every $P \in C$;
    \item \label{it:parahoric}  $\Gc(\Db_P)$ is a \emph{parahoric subgroup} of $\Gc(\Db_P^\times)$ for every $P \in C$, as in \cite{bruhat.tits:1984:II}.
\end{enumerate}
This last condition can be formulated in various equivalent ways, one of which asserts that the (twisted) affine flag variety $\Gr_{\Gc,P}$ is an ind-proper ind-scheme for every $P \in C$ \cite{richarz:16:indproper}. We refer to \cite{pappas.rapoport:2008:haines,heinloth:2017:stability, damiolini.hong:2023,Pappas-Rapoport:2022} for more details. By condition \ref{it:gensimple}, there are only finitely many ``\textit{bad}'' points where the fiber of $\Gc$ is not reductive, and generically étale the fibers of $\Gc$ are isomorphic to a simple and simply-connected group which we will denote by $G$ throughout. Via the choice of a pinning for $G$, we identify the group of outer automorphism of $G$ with the diagram automorphisms of $\Lie(G) \eqqcolon \g$. 

\begin{rmk} \label{rmk:parahoric} \begin{enumerate} [label=(\alph*)]
\item \label{it:par1} Let $P$ be a parabolic subgroup of $G$. Then one defines the associated parahoric subgroup $H_P$ of $G(\Cl{t})$ to be the preimage of $P$ under the map $G(\CC[\![t]\!]) \to G(\CC)$ which sends $t$ to $0$. The group $H_B$, for $B$ a Borel subgroup of $G$, is called an Iwahori subgroup of $G(\Cl{t})$, whence the term parahoric.
\item \label{it:par2} It is not true, in general, that $\Gc(\DD_P^\times) = G(\Cl{t})$. Instead, since we are working over an algebraically closed field of characteristic zero, one has that $\Gc(\DD_P^\times) \cong (G(\Cl{t^{1/r}}))^\tau$ for a diagram automorphism $\tau$ of $G$ of order $r$. One can identify parahoric subgroups of  $(G(\Cl{t^{1/r}}))^\tau$ with groups of the type $G(\CC[\![t^{1/m}]\!])^\sigma$ for an endomorphism $\sigma$ of $G$ of order $m$ whose outer part coincides with $\tau$.
\item \label{it:par3} Recall that to every parabolic subgroup $P$ of $G$ one can associate a set of vertices $Y_P$ of the Dynkin diagram $I_\g$ of $\g$ (which uniquely identify $P$ up to conjugation). A similar combinatorial description can be carried out for parahoric subgroups as well. In fact, one can associate to every parahoric subgroup $H$ of $(G(\Cl{t^{1/r}}))^\tau$ a nonempty subset, denoted $Y_H$, of the set of vertices of the affine Dynkin diagram $\hat{I}_{\g,r}$ associated to $(G(\Cl{t^{1/r}}))^\tau$. Note that  $\hat{I}_{\g,1}$ is obtained from $I_\g$ by simply adding the special vertex $o$, so that we have $Y_{P} \cup \{ o\} \cong Y_{H_P}$ for every parabolic subgroup $P$ of $G$. In general, every affine Dynkin diagram $\hat{I}_{\g,r}$ has a special vertex denoted $o$. 
\end{enumerate}
\end{rmk}

\subsection{Line bundles on \texorpdfstring{$\Bun_\Gc$}{BunG}} \label{sec:PicBunG} Let $\emptyset \neq S \subset C$ be a non-empty set of $C$. 
In view of the uniformization theorem \cite{beauville.laszlo:1994:conformal,heinloth:2010:uniformization}, the stack $\Bun_\Gc$ has the following quotient presentation
\begin{equation}\label{eq:uniformization} \Bun_\Gc \cong \Ls_{C\setminus S}\Gc \setminus \Gr_{\Gc,S} = \Ls_{C\setminus S}\Gc \setminus \left( \prod_{P \in S} \Gr_{\Gc,P}\right).\end{equation} This has two important consequences which we collect in the following remark.

\begin{rmk} \label{rmk:consequences-uniformization}
\begin{enumerate}[label=(\alph*)]
\item \label{it:cons1} A line bundle on $\Bun_\Gc$ is uniquely described by the datum of a line bundle on $\Gr_{\Gc,S}$ together with a $\Ls_{C\setminus S}\Gc$-linearization. When $\Gc_{C \setminus S}$ is ind-integral and every fiber of  the restriction of $\Gc$ to $C \setminus S$ is simple, then one can prove that if such a linearization exists, then it must be unique.
\item \label{it:cons2} For every line bundle $\Lc$ on $\Bun_\Gc$, one has the identification
\[ \Ho^0(\Bun_\Gc, \Lc) = \Ho^0(\Gr_{\Gc,S}, q^*\Lc)^{\Ls_{C\setminus S}\Gc},\] where $q \colon \Gr_{\Gc,S} \to \Bun_\Gc$ denotes the quotient map inducing the isomorphism \eqref{eq:uniformization}. Note that, when $\Ls_{C\setminus S}\Gc$ is ind-integral, then taking global sections of $q^*\Lc$ which are invariant under $\Ls_{C\setminus S}\Gc$, is the same as taking global sections of $q^*\Lc$ on which the Lie algebra $\Lie(\Gc(C\setminus S))$ acts trivially.
\end{enumerate}\end{rmk}

In both \ref{it:cons1} and \ref{it:cons2} above, a crucial property that one would want is that the ind-group scheme $\Gc(C\setminus S)$ is ind-integral. Up to enlarging $S$, if necessary, this is indeed true. Namely:

\begin{prop} \label{prop:ind-integral} [$\Gc$ constant \cite{beauville.laszlo:1994:conformal}, $\Gc$ twisted \cite{hongkumar:2023}, $\Gc$ parahoric \cite{damiolini.hong.gao:2025}] 
    If $S$ contains all bad points of $\Gc$, then $\Gc(C\setminus S)$ is ind-integral.
\end{prop}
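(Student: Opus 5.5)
Throughout, write $U\coloneqq C\setminus S$. The strategy is the one of Beauville--Laszlo for constant groups: exhibit $\Ls_{U}\Gc$ as an ind-scheme which is ind-reduced and connected, and then deduce ind-integrality. The hypothesis that $S$ contains all bad points is used in one place: over $U$ the group $\Gc|_U$ is a reductive group scheme, indeed a smooth affine group scheme whose fibers are all simple and simply connected, and it is étale-locally isomorphic to $G\times U$.

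\emph{Step 1: ind-scheme structure.} Embed $\Gc|_U$ as a closed subgroup of $\GL(E)$ for some vector bundle $E$ on $U$. This is possible because $\Gc$ is affine and smooth over a Dedekind base. Then $\Ls_U\Gc$ is a closed sub-ind-scheme of the ind-affine ind-scheme of sections with poles of bounded order along $S$. This gives an ind-affine ind-scheme of ind-finite type, $\Ls_U\Gc=\varinjlim X_N$.

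\emph{Step 2: connectedness.} Use the uniformization \eqref{eq:uniformization}, together with the facts that $\Bun_\Gc$ is connected and $\Gr_{\Gc,S}$ is connected. Connectedness of $\Gr_{\Gc,P}$ holds because $G$ is simply connected, so that $\pi_1(G)_\tau=0$ and the Kottwitz map is trivial. From $\Ls_U\Gc\backslash\Gr_{\Gc,S}\cong\Bun_\Gc$, and the fact that the fiber of $q$ over the base point is $\Ls_U\Gc$ acting simply transitively, one reduces to showing that $\Ls_U\Gc$ is generated by connected subvarieties through the identity. For this, adapt the argument of \cite{beauville.laszlo:1994:conformal}. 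Over $U$, with $U$ affine, consider root subgroups. Choose a generically split maximal torus of the twisted form; after an étale cover, or via the $\Gamma$-descent description from Remark~\ref{rmk:parahoric}\ref{it:par2}, the group $\Gc(\Kc)$ over the function field is quasi-split. The subgroup generated by the images of the root subgroups $U_\alpha(\Oc(U))$, which are affine spaces and hence connected, is connected. Borel--Tits/Steinberg theory for simply connected quasi-split groups over the Dedekind ring $\Oc(U)$, together with the fact that $SK_1$-type obstructions vanish in this geometric setting, shows that these elements generate $\Ls_U\Gc(R)$ up to connected pieces. Hence $\Ls_U\Gc$ is connected.

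\emph{Step 3: reducedness.} Show that the ind-scheme is reduced by producing enough smooth connected curves through the identity in each $X_N$. Equivalently, show that the formal group at the identity is the formal completion of the Lie algebra $\Lie(\Gc)(U)=\g_{C,S}$ in the twisted sense. Here I would use the exponential map on nilpotent elements: elements of root subalgebras give maps $\mathbb{A}^n\to\Ls_U\Gc$, and the Lie algebra is spanned by such elements together with the torus part. The torus part is handled by coroots $\alpha^\vee(f)$ with $f$ a unit, which suffices because $G$ is simply connected. This shows that the ind-scheme is the filtered union of images of products of reduced, irreducible schemes containing the identity. A connected ind-group of ind-finite type over $\CC$ that is the increasing union of such images is ind-integral, by the Shafarevich-type lemma used in \cite{beauville.laszlo:1994:conformal}.

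\emph{Main obstacle.} I expect the main obstacle to be Step 2/3 in the non-split, parahoric-but-generically-twisted case: proving that $\Gc(U)$ is generated by root-subgroup images. The first approach I would try is to pass to the $\Gamma$-cover $\pi\colon X\to C$ trivializing the outer twist, so that $\Gc|_U=(\pi_*G_{X\setminus\pi^{-1}S})^\Gamma$. The required generation can then be obtained from the corresponding statement for the split group $G(X\setminus\pi^{-1}S)$ by taking $\Gamma$-invariants of root subgroups, which are again affine spaces (Weil restrictions of $\mathbb{G}_a$ along the unramified part). This is the point where the hypothesis that $S$ contains all bad points, so that $\pi$ is étale over $U$, becomes essential.
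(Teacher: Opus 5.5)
The paper does not prove this proposition; it only cites the literature. I am therefore assessing your plan on its own terms. Step~1 and the auxiliary facts you quote (connectedness of $\Gr_{\Gc,P}$ and of $\Bun_\Gc$, quasi-splitness over the function field) are fine. The genuine gap is the connectedness input of Step~2, which you reuse at the end of Step~3.

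The subgroup generated by the root subgroups $U_\alpha(\Oc(U))$ and the elements $\alpha^\vee(f)$, $f\in\Oc(U)^\times$, is only the elementary subgroup $E(\Oc(U))$. Your claim that ``$SK_1$-type obstructions vanish in this geometric setting'' is false in positive genus. Take the constant group $\SL_n\times C$ with $n\geq 3$ and $A=\Oc(U)$, a Dedekind ring. Then $\SL_n(A)/E_n(A)\cong SK_1(A)$. For $C=E$ an elliptic curve and $S=\{O\}$, this gives $SK_1(A)\cong\ker\bigl(H^1(E,\mathcal{K}_2)\to\CC^\times\bigr)$, which is non-zero. For example, for $P$ of infinite order and $\tau$ transcendental, the class of $\tau$ placed at $P$ minus $\tau$ placed at $O$ survives a boundary map to $\Pic(E)$ after specializing $\tau\to 0$.

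Nor can this be repaired inside your framework. Since $A$ is regular, $K_1(A[t_1,\dots,t_m])=K_1(A)$. Hence every morphism from an affine space to $\Ls_U\SL_n$ passing through $e$ lands in $E_n(A)$; this includes products of root subgroups and exponentials of nilpotent elements. Reaching the remaining elements needs families of a different kind. One example is the copies of $\mathbb{G}_m$ acting by $(a,a^{-1})$ on $L\oplus L^{-1}\subset L\oplus L^{-1}\oplus A^{n-2}\cong A^n$, for suitable non-trivial line bundles $L$ on $U$; along these the $SK_1$-class does move. Otherwise you need a genuinely global argument.

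Your uniformization paragraph does not supply such an argument. Connectedness of $\Gr_{\Gc,S}$ and $\Bun_\Gc$ does not force the fibre $\Ls_U\Gc$ to be connected. Indeed, the quotient of $\Gr_{\Gc,S}$ by the identity component of $\Ls_U\Gc$ is a $\pi_0(\Ls_U\Gc)$-torsor over $\Bun_\Gc$, so you would also need that $\Bun_\Gc$ has no non-trivial connected étale covers. The $\Gamma$-descent in your last paragraph is a further unjustified step: $\Gamma$-invariants of a group generated by subgroups need not be generated by the $\Gamma$-invariants of those subgroups.

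Step~3 does not establish reducedness as written, although here the idea can be rescued. Maps from reduced schemes only land in $(\Ls_U\Gc)_{\mathrm{red}}$ and see no nilpotents; compare $\Ls_U\mathbb{G}_m$, which is not reduced. What is true over $\CC$ is a tangent-space criterion: an ind-group $H$ of ind-finite type with $\Lie(H_{\mathrm{red}})=\Lie(H)$ is reduced. The argument runs as follows.
\begin{itemize}
\item By Cartier--Kostant, both distribution algebras at $e$ are enveloping algebras of the respective Lie algebras, so they coincide.
\item By translation, every infinitesimal neighbourhood of every point of $H_N$ then lies in some $(H_M)_{\mathrm{red}}$.
\item Noetherianity of $H_N$ gives $H_N\subseteq(H_M)_{\mathrm{red}}$ for $M\gg0$.
\end{itemize}
In the constant case, your root-subgroup exponentials together with $[\g_\alpha\otimes A,\g_{-\alpha}\otimes A]=[\g_\alpha,\g_{-\alpha}]\otimes A$ do give $\Lie(H_{\mathrm{red}})=\g\otimes A=\Lie(H)$. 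Alternatively, reducedness follows from the uniformization: $\Gr_{\Gc,S}$ is reduced in characteristic $0$, $\Bun_\Gc$ is smooth, and $\Gr_{\Gc,S}\to\Bun_\Gc$ is an étale-locally trivial $\Ls_U\Gc$-torsor.

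Irreducibility is the real problem. Your statement that the ind-scheme is the filtered union of images of products of irreducible schemes through $e$, built from root subgroups and the split torus, is exactly the false generation claim above. So the Shafarevich-type lemma has no valid input.
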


One can show that, under the assumptions of \cref{prop:ind-integral}, the pullback map $q^* \colon \Pic(\Bun_\Gc) \to \Pic(\Gr_{\Gc,S})$ is injective. We are left to understand the image of $q^*$. To do so, it will be helpful to describe $\Pic(\Gr_{\Gc,S})$ in a more combinatorial way. In view of \cite{pappas.rapoport:2008:haines,zhu:2014:coherence}, one has a natural description \[\Pic(\Gr_{\Gc,S}) =\prod_{P \in S} \Pic(\Gr_{\Gc,P}) \cong \prod_{P \in S} \bigoplus_{i \in Y_{\Gc(\DD_P)}} \Lambda_i \ZZ\] where $\Lambda_i$ denotes the $i$-th fundamental weight of the affine Dynkin diagram associated with $\Gc(\DD_P^\times)$. This expression allows one to define the \emph{central charge} map
\begin{equation} \label{eq:ccGr}
    \cc_P \colon \Pic(\Gr_{\Gc,P}) \to \ZZ, \qquad  \cc \left(\sum_{i \in Y_{\Gc(\DD_P)}} n_i \Lambda_i \right) \coloneqq \sum_{i \in Y_{\Gc(\DD_P)}} n_i \check{a}_i,
\end{equation}
where $\check{a}_i \in \{1, \dots, 6\}$ are the \textit{dual Kac labels} (see \cite[\S6.1]{kac:1990:infinite}). As $P$ varies in $S$ the assignment \eqref{eq:ccGr} defines the map
\[ \cc_S \colon \Pic(\Gr_{\Gc,S}) \longrightarrow \bigoplus_{P \in S}\ZZ.
\] Let $\Delta \colon \ZZ \to  \bigoplus_{x \in S}\ZZ$ be the diagonal inclusion, and set $\Pic^\Delta(\Gr_{\Gc,S}) \coloneqq \cc_S^{-1}(\Delta(\ZZ))$, that is $\Pic^\Delta(\Gr_{\Gc,S})$ is the group of line bundles on $\Gr_{\Gc,S}$ whose central charge, computed using the various points $P \in S$, is constant. 

\begin{prop} \cite[Proposition 4.1]{zhu:2014:coherence} Assume that $S$ contains all the bad points of $\Gc$. Then the image of $q^*$ is contained in $\Pic^\Delta(\Gr_{\Gc,S})$. In particular we have a well defined notion of central charge $\cc$ for every line bundle of $\Bun_\Gc$. \end{prop}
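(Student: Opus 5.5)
Fix a line bundle $\Lc$ on $\Bun_\Gc$. The plan is to read off the central charge of $q^*\Lc$ at each point $P\in S$ from the obstruction to lifting the loop group action, and then to compare these obstructions using the residue theorem.

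By \cref{rmk:consequences-uniformization}\ref{it:cons1}, the line bundle $q^*\Lc$ on $\Gr_{\Gc,S}=\prod_{P\in S}\Gr_{\Gc,P}$ carries an $\Ls_{C\setminus S}\Gc$-linearization. Write $q^*\Lc=\boxtimes_{P\in S}\Lc_P$, with $\Lc_P\in\Pic(\Gr_{\Gc,P})$ corresponding to $\sum_i n_i^{(P)}\Lambda_i$. Each $\Lc_P$ is $\Ls_P\Gc$-equivariant only up to a central extension. On Lie algebras this is $\widehat{\Lie(\Gc(\DD_P^\times))}$, whose central element $\mathsf K$ acts on sections of $\Lc_P$ by the scalar $\cc_P(\Lc_P)$. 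That scalar is exactly the level attached to the affine weight $\sum n_i\Lambda_i$, because the level of $\Lambda_i$ is the dual Kac label $\check a_i$.

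Concretely, I would establish the following for each $P$.

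1. The canonical central extension of $\Lie(\Gc(\DD_P^\times))\cong(\g\otimes\Cl{t^{1/r}})^\tau$ by $\CC$ acts on $\Lc_P$, with $\mathsf K$ acting by $\cc_P(\Lc_P)$. This reduces to the generator $\Lc(\Lambda_i)$, where it is the standard Kac–Moody statement (\cite{kumar:book}, \cite{pappas.rapoport:2008:haines}).

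2. On $q^*\Lc$, the Lie algebra $\bigoplus_P\Lie(\Gc(\DD_P^\times))$ therefore acts through a central extension by $\CC^S$, and the $P$-th central generator acts by $\cc_P(\Lc_P)$.

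Next, the linearization provides a lift of $\Lie(\Gc(C\setminus S))$ into this extension which acts compatibly on $q^*\Lc$. The lift exists because $\Ls_{C\setminus S}\Gc$ acts on $q^*\Lc$ and that action differentiates. Comparing the lifted action with the action pulled back from the product of local central extensions, the defect is a 2-cocycle on $\Lie(\Gc(C\setminus S))$ given by
\[(X f, Y g)\mapsto \sum_{P\in S}\cc_P(\Lc_P)\,(X|Y)\,\Res_P(g\,df),\]
suitably normalized in the twisted case via $\pi\colon X\to C$. This cocycle must be a coboundary.

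For a constant $c$ the cocycle vanishes by the residue theorem, so equal central charges are certainly allowed. The key step, and the main obstacle, is the converse: showing that the cocycle is a coboundary only when all the $\cc_P(\Lc_P)$ coincide.

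- Since $\Lie(\Gc(C\setminus S))$ is perfect, because its generic fiber is simple, any coboundary vanishes on commutator-type pairs. So it suffices to exhibit, for two points $P\neq P'\in S$, elements $f,g\in\Ho^0(C\setminus S,\Oc)$, or their $\Gamma$-invariant twisted analogues, with $\Res_{P}(g\,df)\neq0$, $\Res_{P'}(g\,df)=-\Res_P(g\,df)$, and vanishing residues elsewhere.
- This step is where \cref{prop:ind-integral} and the assumption that $S$ contains all bad points are needed. They guarantee enough global sections, and that the integrated action equals the Lie algebra action.
- I would construct $f,g$ by Riemann–Roch with poles of high order at $P$ and $P'$. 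In the twisted case, I would work on the cover $X$, average over $\Gamma$, and choose $X,Y$ in a $\tau$-eigenspace pairing nontrivially, so as to respect the $\Gamma$-invariance.

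Finally, the common value $\cc_P(\Lc_P)$ defines $\cc(\Lc)$. The map $\Lc\mapsto\cc(\Lc)$ is additive, and it is well defined by the injectivity of $q^*$.
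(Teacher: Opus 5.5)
The paper does not prove this statement; it quotes it from Zhu. So there is no internal argument to match, and your reciprocity argument has to stand on its own. Its skeleton is sound:
\begin{itemize}
\item The $\Ls_{C\setminus S}\Gc$-linearization of $q^*\Lc=\boxtimes_P\Lc_P$ and the action of the product of the local Kac--Moody extensions lift the same vector fields on $\Gr_{\Gc,S}$.
\item Two such lifts differ by a global function. This is a constant, because $\Gr_{\Gc,S}$ is connected, reduced and ind-proper.
\item Hence the restriction of $\sum_P\cc_P(\Lc_P)\,\omega_P$ to $\Lie(\Gc(C\setminus S))$ is a coboundary.
\end{itemize}
Compared with a bare citation, this route shows that the proposition is exactly the reciprocity law for the local central extensions. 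Contrary to what you write, you only \emph{differentiate} the global action and never integrate it, so ind-integrality is never used. The bad-points hypothesis enters only through the étale description $\Gc|_{C\setminus S}\cong\pi_*(G\times X)^\Gamma$. That description gives you explicit elements of $\Lie(\Gc(C\setminus S))$ and identifies the local cocycles.

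Several steps need repair.

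\textbf{Commuting pairs.} Perfectness is irrelevant. For any Lie algebra, a coboundary $(a,b)\mapsto\phi([a,b])$ vanishes on \emph{commuting} pairs, and those are the pairs you must test on. Choosing $X,Y$ that merely pair nontrivially, such as $E_\alpha,E_{-\alpha}$, does not suffice, since $[X,Y]\neq 0$. Instead take $H$ in a Cartan subalgebra of $\g^\Gamma$ with $(H|H)\neq0$, and set $a=H\otimes f$, $b=H\otimes g$ with $f,g\in\Ho^0(C\setminus S,\Oc_C)$. These are already $\Gamma$-invariant, so no averaging over the cover is needed.

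\textbf{Residues.} You do not need a single pair $(f,g)$ with prescribed residues. Every regular $1$-form on the affine curve $C\setminus S$ is a finite sum $\sum_i g_i\,df_i$. Differentials of the third kind realize every zero-sum vector in $\CC^S$ as a residue vector. By linearity, $\sum_P\cc_P r_P=0$ for all such vectors, which forces the $\cc_P$ to coincide.

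\textbf{Normalization.} This is the most important point. The phrase ``suitably normalized'' hides the only place where the dual Kac labels matter. You must check that the canonical central element $\sum_i\check a_i\check\alpha_i$ at $P$ corresponds to the cocycle $(X|Y)\Res_P(g\,df)$ taken downstairs, i.e.\ $\frac{1}{r_P}\Res_{t^{1/r_P}}$, with the same normalized form on $\g$ at every $P$. If you used the residue in $t^{1/r_P}$ instead, your argument would prove that $r_P\cc_P$, not $\cc_P$, is constant. The check does go through. For example, for $A_2^{(2)}$ with cocycle $c\,\Res_{t^{1/2}}$ one finds $\check\alpha_0+2\check\alpha_1=2c\,\mathsf K$, which forces $c=\frac12$.

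\textbf{Well-definedness.} $\cc(\Lc)$ is simply the common value of the $\cc_P$, so injectivity of $q^*$ plays no role.
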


In \cite{damiolini.hong.gao:2025} Hong and the author conjectured that indeed $q^*$ provides an isomorphism between $\Pic(\Bun_\Gc)$ and $\Pic^\Delta(\Gr_{\Gc,S})$.\footnote{In a very recent pre-print \cite{balaji.pandey}, the authors show that this is indeed the case. I am not able to understand their argument and therefore to confidently assert that this conjecture has been solved.}

\begin{rmk}
    In \cite{heinloth:2010:uniformization} it is shown that there is an exact sequence
    \[0 \to \prod_{P \in S} \textrm{Char}(\Gc_P) \longrightarrow \Pic(\Bun_\Gc) \overset{\cc}{\longrightarrow} \cc_\Gc \ZZ \to 0\] for some constant $\cc_\Gc$. Therefore the equality of $\Pic(\Bun_\Gc)$ with $\Pic^\Delta(\Gr_{\Gc,S})$ is numerically equivalent to \[\cc_\Gc = \underset{P \in S}{\operatorname{lcm}}\left(\underset{i \in Y_{\Gc(\DD_P))}}{\operatorname{gcd}}{\check{a}_i} \right).\]

\end{rmk}

\subsection{Sections of line bundles and (twisted) conformal blocks}  \label{sec:tCBforBunG} We describe here the relation between sections of line bundles over $\Bun_\Gc$ and twisted conformal blocks. As an application, we describe a criterion to detect descent from $\Gr_{\Gc,S}$ to $\Bun_\Gc$. A different approach to this problem was given by Faltings in \cite{faltings:loop}. Throughout this section $S$ will denote a non-empty subset of $C$ containing all the bad points of $\Gc$.

\pp Consider the line bundle $\Lc_\Lambda$ on $\Gr_{\Gc,P}$ associated with $\Lambda = \sum n_i \Lambda_i$. Assume that $\Lambda$ is dominant, i.e. that $\Lambda \neq 0$ and $n_i \geq 0$ for every $i$. As shown in \cite{faltings:verlinde,hongkumar:2023}, the representation $\Hc(\Lambda)$ of the central extension of the Lie algebra $\Lie(\Gc(\DD_P^\times))$ can be integrated to a projective representation of ${\Ls_P\Gc}(\Lambda)$.

By applying a generalization of the Borel--Weil theorem to $\Gr_{\Gc,P}$  one obtains the identification
\begin{equation} \label{eq:H0GrG} \Ho(\Gr_{\Gc,P}, \Lc_{\Lambda}) = \Hc(\Lambda)^\vee = \Hom_\CC(\Hc(\Lambda), \CC).
\end{equation}
We observe that the identification \eqref{eq:H0GrG} is of $\widehat{\Ls_P\Gc}(\Lambda)$-modules, where $\widehat{\Ls_P\Gc}(\Lambda)$ is the  central extension of $\Ls_P\Gc$ determined by the projective action of ${\Ls_P\Gc}(\Lambda)$ on $\Hc(\Lambda)$.

\begin{ex} \label{ex:Lambda}
   \begin{enumerate} [label=(\alph*)] \item  When $\Gc=G$, then one has that $\Pic(\Gr_{G,P}) = \ZZ \Lambda_o$, where $o$ is the special vertex of the Dynkin diagram of $\gh$. The representation $\Hc(\ell \Lambda_o)$ coincides with the representation denoted $L_{0,\ell}$ in \cref{sec:CBfromLieAlgebras}. The Lie algebra of $\widehat{\Ls_PG}(\ell\Lambda_o)$ coincides with $\gh$, where the center acts by $\ell$. 
    
    \item Consider the parabolic case, i.e. assume that $\Gc(\DD_P)=H_B$ for a $B \subset G$ a Borel subgroup. Recall the identification $Y_{H_B}= Y_P \cup \{o\} = I_g \cup \{0\}$ described in \cref{rmk:parahoric} \ref{it:par3}. This identification can be interpreted as a way to decompose every non-special weight $\Lambda_i$  (i.e. $i \neq o$) as a sum $\lambda_i + \check{a}_i \Lambda_o$ where $\lambda_i$ are weights of the  finite dimensional Lie algebra $\g$ and  $\check{a}_i = \cc(\Lambda_i)$ as in \eqref{eq:ccGr}. It follows that, if $\Lambda = \sum n_i \Lambda_i$, then the $\gh$ representation $\Hc(\sum n_i \Lambda_i)$ equals $L_{\ell, \lambda}$ for $\ell = \sum n_i\check{a}_i$  and $\lambda = \sum_{i \neq o} n_i \lambda_i$.  

    \item For a general parahoric subgroup of $(G(\Cl{t^{1/r}}))^\tau$, we have that $\Hc(\Lambda)$ is an irreducible integrable representation of the twisted affine Lie algebra $\gh^\tau=(\g(\!(t)\!) \oplus \CC)^\tau$. In the language of twisted modules for VOAs, these modules are exactly the $\tau$-twisted $L_{\ell}(\g)$-modules (see \cref{sec:twCB}). \end{enumerate}
\end{ex}

Consider now $\Lc_{\vec{\Lambda}}$, the line bundle on $\Gr_{\Gc,S}$ corresponding to the tuple $\vec{\Lambda} \in \Pic^\Delta(\Gr_{\Gc,S})$ for dominant $\Lambda$s. Combining  \cref{rmk:consequences-uniformization} (2) with \eqref{eq:H0GrG} we obtain that, if $\Lc$ is the line bundle on $\Bun_\Gc$ pulled back from $\Lc_{\vec{\Lambda}}$---in formulas if $q^*\Lc = \Lc_{\vec{\Lambda}}$---then
\[ \Ho^0(\Bun_\Gc, \Lc) = \Ho^0(\Gr_{\Gc,S}, \Lc_{\vec{\Lambda}})^{\Ls_{C\setminus S}\Gc} =  (\Hc(\vec{\Lambda})^\vee)^{\Ls_{C\setminus S}\Gc}.\]
Since $\Ls_{C\setminus S}\Gc$ is ind-integral, then we also have
\[ \Ho^0(\Bun_\Gc, \Lc) = (\Hc(\vec{\Lambda})^\vee)^{\Lie(\Gc(C\setminus S))}= \Hom_{\Lie(\Gc(C\setminus S))}(\Hc(\vec{\Lambda}), \CC),\] and there is a unique way in which the Lie algebra $\Lie(\Gc(C\setminus S))$ acts on $\Hc(\vec{\Lambda})$. 

As noted in \cref{ex:Lambda}, when $\Gc$ is parabolic (or constant), the datum $\vec{\Lambda}$ can be identified with $(\vec{\lambda}, \ell)$ for $\lambda_i \in P^+_\ell$ and $\ell=\cc(\vec{\Lambda})$.  It follows that, when $\Gc$ is parabolic (or constant), one obtains an identification
\begin{equation}  \label{eq:H0=CB} \Ho^0(\Bun_\Gc, \Lc) \cong \VV(C; P_\bullet; \Hc(\vec{\Lambda}))^\dagger,\end{equation} 
where, on the right hand side we wrote $\Hc(\vec{\Lambda})$ in place of the notation $\Hc_{\Lambda_\bullet}$ used in \cref{sec:CB}. The isomorphism \eqref{eq:H0=CB} has been shown in \cite{beauville.laszlo:1994:conformal, laszlo.sorger:1997,sorger:99:moduli}.

\pp An isomorphism analogous to \eqref{eq:H0=CB} also holds true for every parahoric group $\Gc$ and it involves twisted conformal blocks (even if $\Gc$ is not a twisted group). To state this result, we first need to set up some notation. Assume that $S \subset C$ is a non-empty and finite set containing all bad points of $\Gc$. Then in \cite{damiolini.hong:2023} it is shown that there exists a subgroup $\Gamma$ of the diagram automorphisms of $\g$ and an étale $\Gamma$-cover 
\begin{equation*} \label{eq:twG} \pi \colon X \to C\setminus S  \quad \text{ such that } \quad \Gc|_{C\setminus S} \cong \pi_*(G \times X)^\Gamma.\end{equation*}
The cover $\pi$ naturally extends to a possibly ramified $\Gamma$-cover $\overline{\pi} \colon \overline{X} \to C$. Let us choose, for every $P \in S$, a point $x \in \overline{X}$ such that $\overline{\pi}(x)=P$.  From \cref{ex:Lambda}, we deduce that the datum of a dominant $\Lambda \in \Pic(\Gr_{\Gc,P})$ is equivalent to the datum of a $\tau_x$-twisted $L_{\cc(\Lambda)}(\g)$-module, where $\tau_x$ is the stabilizer of the point $x \in \overline{X}$ above $P$. We can then apply the same steps used to show \eqref{eq:H0=CB} to show that \eqref{eq:H0=CB} extends beyond the parabolic case.

\begin{thm} \cite{damiolini.hong.gao:2025} If a dominant line bundle $\Lc(\vec{\Lambda})$ descends to a line bundle $\Lc$ on $\Bun_\Gc$, there is a natural isomorphism
\begin{equation} \label{eq:H0=twCB}
    \Ho^0(\Bun_\Gc, \Lc) \cong \VV(\overline{\pi} \colon \overline{X} \to C; x_\bullet, \Hc(\vec{\Lambda}))^\dagger.
\end{equation}
\end{thm}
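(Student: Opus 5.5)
We follow the same three steps used for \eqref{eq:H0=CB} in the parabolic case, and the only new ingredient is the comparison between the Lie algebra of $\Ls_{C\setminus S}\Gc$ and the global Lie algebra defining twisted coinvariants.

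First, since $S$ contains all bad points of $\Gc$, \cref{prop:ind-integral} tells us that $\Ls_{C\setminus S}\Gc$ is ind-integral. By \cref{rmk:consequences-uniformization}\ref{it:cons2} and the uniformization \eqref{eq:uniformization}, we then get
\[\Ho^0(\Bun_\Gc,\Lc)=\Ho^0(\Gr_{\Gc,S},\Lc_{\vec\Lambda})^{\Lie(\Gc(C\setminus S))}.\]
Next we apply the Borel--Weil identification \eqref{eq:H0GrG} at each $P\in S$ and take the tensor product over $S$. This identifies $\Ho^0(\Gr_{\Gc,S},\Lc_{\vec\Lambda})$ with $\Hc(\vec\Lambda)^\vee$, equivariantly for the product of the central extensions $\widehat{\Ls_P\Gc}(\Lambda_P)$. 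It remains to check that the combined central extension splits canonically over $\Ls_{C\setminus S}\Gc$. The central charges agree because $\vec\Lambda\in\Pic^\Delta(\Gr_{\Gc,S})$, and the existence of a linearization follows because $\Lc_{\vec\Lambda}$ descends. Uniqueness of the resulting action is \cref{rmk:consequences-uniformization}\ref{it:cons1}. We conclude that $\Ho^0(\Bun_\Gc,\Lc)=\Hom_{\Lie(\Gc(C\setminus S))}(\Hc(\vec\Lambda),\CC)$.

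Second, we identify $\Lie(\Gc(C\setminus S))$ with the twisted global Lie algebra. By the result of \cite{damiolini.hong:2023} we have $\Gc|_{C\setminus S}\cong\pi_*(G\times X)^\Gamma$. Hence
\[\Lie(\Gc(C\setminus S))=(\g\otimes\Ho^0(X,\Oc_X))^\Gamma=(\g\otimes\Ho^0(\overline X\setminus\overline\pi^{-1}(S),\Oc_{\overline X}))^\Gamma=\g^\Gamma_{C,S},\]
which is exactly the Lie algebra of the Example following the definition of twisted coinvariants. We then need to check that the embedding of this Lie algebra into $\prod_{P\in S}\Lie(\Gc(\DD_P^\times))$, given by Laurent expansion, agrees with the map into $\prod_P(\g\otimes\Cl{t^{1/m_P}})^{\tau_{x}}$ obtained by expanding at the chosen $x$ over $P$. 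Concretely, $\Gc(\DD_P^\times)\cong G(\Cl{t^{1/m}})^{\tau_x}$ via the local structure of $\overline\pi$ at $x$, following \cref{rmk:parahoric}\ref{it:par2}. We also need the splitting from the first step to coincide with the residue-theorem splitting, which we check by a $\Gamma$-invariant residue computation on $\overline X$. Once this is done, \cref{ex:Lambda}(c) identifies $\Hc(\Lambda_P)$ with the $\tau_x$-twisted $L_{\cc(\vec\Lambda)}(\g)$-module, and therefore
\[\Hom_{\g^\Gamma_{C,S}}(\Hc(\vec\Lambda),\CC)=\VV(\overline\pi;x_\bullet;\Hc(\vec\Lambda))^\dagger,\]
using the equality of coinvariants for $\g^\Gamma_{C,S}$ and $\Lc_{\pi\colon X\to C,P_\bullet}(V)$ \cite{frenkelSzczesny:twusted}.

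We expect the main obstacle to be this local compatibility at the bad points. For a parahoric $\Gc$ that is not globally twisted, $\Gc(\DD_P)$ is a parahoric subgroup and in general not the $\Gamma_x$-invariants of $G(\CC[\![z]\!])$. The identification must therefore be made at the level of the loop group $\Ls_P\Gc$, which is insensitive to the choice of parahoric, while the parahoric only determines which $\Hc(\Lambda)$ appears. We would first handle the choice of $x$, and of the coordinate at $x$, by showing that the resulting coinvariants are independent of these choices up to the canonical isomorphism of \cite{frenkelSzczesny:twusted}.
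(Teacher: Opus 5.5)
Your proposal is correct and follows essentially the same route as the paper. Uniformization, ind-integrality of $\Ls_{C\setminus S}\Gc$ and the Borel--Weil identification reduce $\Ho^0(\Bun_\Gc,\Lc)$ to $\Lie(\Gc(C\setminus S))$-invariant functionals on $\Hc(\vec\Lambda)$; then the description $\Gc|_{C\setminus S}\cong\pi_*(G\times X)^\Gamma$ from \cite{damiolini.hong:2023}, together with part (c) of \cref{ex:Lambda}, identifies these functionals with twisted conformal blocks. Your separate $\Gamma$-invariant residue check is harmless but redundant: the uniqueness of the $\Lie(\Gc(C\setminus S))$-action on $\Hc(\vec\Lambda)$, which the paper invokes and you already cite in your first step, forces that action to agree with the one coming from the residue theorem.
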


A consequence of this result is that the space of sections is finite dimensional and, using \cite{deshpande.mukhopadhyay:2019}, it is computable via the twisted Verlinde formula. We refer to \cite{nick} for a generalization  of \eqref{eq:H0=CB} involving higher cohomology on the left-hand side and chiral homology on the right-hand side. The isomorphism \eqref{eq:H0=CB} also plays an important role in the study of strange duality maps between moduli of principal bundles \cite{belkale:strange, 
pauly:SD:asterisque, mukhopadhyay.wentworth:SD, martens.etal:2412}.  It would be interesting to know if \eqref{eq:H0=twCB} can have 
similar generalizations and applications. 
Finally, we note that under the identification \eqref{eq:H0=CB}, the connection of \cref{prop:connectionMgn} coincides with the Hitchin connection on the bundle of generalized theta functions \cite{laszlo:connection}, a result that has been generalized to the parabolic setting in \cite{biswas.mukhopadhyay.wentworth:hitchin} and to twisted groups in \cite{geometrization.tuy}.

\pp We conclude describing how non-vanishing of twisted conformal blocks provides a strategy to control descent from $\Gr_{\Gc,S}$ to $\Bun_\Gc$, namely: 

\begin{prop}\cite{sorger:99:moduli,damiolini.hong.gao:2025}  \label{prop:criterion}
 If $\VV(\vec{\Lambda})\coloneqq \VV(\overline{\pi} \colon \overline{X} \to C; x_\bullet, \Hc(\vec{\Lambda}))$ is non zero, then $\Lc_{\vec{\Lambda}}$ descends to a (necessarily unique) line bundle on $\Bun_\Gc$.
\end{prop}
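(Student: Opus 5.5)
The descent of $\Lc_{\vec{\Lambda}}$ is, by \cref{rmk:consequences-uniformization}\ref{it:cons1} and the uniqueness of linearizations (available once $S$ contains all bad points, by \cref{prop:ind-integral}), the existence of an $\Ls_{C\setminus S}\Gc$-linearization on $\Lc_{\vec{\Lambda}}$. The obstruction comes from the central extension. The loop group $\Ls_S\Gc=\prod_{P\in S}\Ls_P\Gc$ acts on $\Lc_{\vec{\Lambda}}$ only through the central extension $\widehat{\Ls_S\Gc}(\vec{\Lambda})$ by $\Gm$, namely the product of the extensions $\widehat{\Ls_P\Gc}(\Lambda_P)$ modulo the antidiagonal torus. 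So the plan is to show that this extension splits canonically over $\Ls_{C\setminus S}\Gc$ once the coinvariants are nonzero. A splitting gives an action of $\Ls_{C\setminus S}\Gc$ on $\Lc_{\vec{\Lambda}}$ lifting its action on $\Gr_{\Gc,S}$, i.e.\ a linearization, hence a line bundle $\Lc$ with $q^*\Lc=\Lc_{\vec{\Lambda}}$. Uniqueness is then exactly the uniqueness in \cref{rmk:consequences-uniformization}\ref{it:cons1}.

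\textbf{Step 1 (Lie algebra level).} By the residue theorem, applied on the cover $\overline{X}$ and then taking $\Gamma$-invariants, the Lie algebra cocycle of the central extension vanishes on $\Lie(\Gc(C\setminus S))=\g_{C,S}^\Gamma$. This uses that the central charges $\cc(\Lambda_P)$ all equal a common $\ell$, which holds since $\vec{\Lambda}\in\Pic^\Delta$. Hence $\g_{C,S}^\Gamma$ embeds in the central extension of $\bigoplus_P\Lie(\Ls_P\Gc)$, and this embedding is exactly how it acts on $\Hc(\vec{\Lambda})$ when defining $\VV(\vec{\Lambda})$.

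\textbf{Step 2 (integration to the group).} Because $\Ls_{C\setminus S}\Gc$ is ind-integral, hence connected and reduced, a Lie algebra splitting determines at most one group splitting. The restricted extension $E\to\Ls_{C\setminus S}\Gc$ is a $\Gm$-torsor whose Lie algebra is split, so it is given by a character $\chi\colon E\to\Gm$ that is the identity on $\Gm$, well defined up to a character of $\Ls_{C\setminus S}\Gc$. The problem is to show $\chi$ exists, equivalently that $E$ has a character with $\chi|_{\Gm}=\mathrm{id}$. This is where nonvanishing enters.

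\textbf{Step 3 (the main obstacle).} Take a nonzero conformal block $\varphi\in\VV(\vec{\Lambda})^\dagger=\Hom_{\g^\Gamma_{C,S}}(\Hc(\vec{\Lambda}),\CC)$. The extension $\widehat{\Ls_S\Gc}(\vec{\Lambda})$ acts on $\Hc(\vec{\Lambda})$ by integrating the Lie algebra action, and hence on its dual. I would show that $E$ preserves the line $\CC\varphi$. The natural route is to take a vector $v$ and an element $g$ of $E$ lying over an ind-integral piece, and consider the function $g\mapsto\varphi(g^{-1}v)$. Its derivative along $\Lie(E)$ is governed by the Lie algebra action, which kills $\varphi$ modulo the center. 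Integrality of $\Ls_{C\setminus S}\Gc$, with the center acting by scalars, should then force $g\cdot\varphi=\chi(g)\varphi$ for a function $\chi$ on $E$ that is multiplicative and restricts to the identity on $\Gm$. Since $\varphi\neq0$, $\chi$ is a genuine character, and it gives the required splitting.

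The delicate point is the ind-scheme bookkeeping. $\Hc(\vec{\Lambda})$ is only a union of finite-dimensional pieces stable under $\Ls^+$, so the argument has to be run on the dual $\Ho^0(\Gr_{\Gc,S},\Lc_{\vec{\Lambda}})$ via \eqref{eq:H0GrG}. There $\varphi$ is an honest section, and the invariance becomes the statement that the section is an eigenvector for $E$. Finally, I would check that the resulting $\Gm$-equivariant structure is compatible with the $\Gm$-action on the fibres, which makes it a linearization and therefore yields the descent.
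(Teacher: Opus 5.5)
Your argument is essentially the paper's, in dual form. The paper lets the central extension restricted to $\Ls_{C\setminus S}\Gc$ act on the nonzero, finite-dimensional coinvariants $\VV(\vec{\Lambda})$; the induced projective action $\rho\colon \Ls_{C\setminus S}\Gc\to \mathrm{PGL}(\VV(\vec{\Lambda}))$ has zero derivative, so by ind-integrality it is trivial, the extension acts by a scalar character, and that character gives the splitting. You extract the same character from a single nonzero conformal block $\varphi$, shown to be an eigenvector. If you pass to $\VV(\vec{\Lambda})$ as the paper does, your Step 3 becomes a statement about a morphism into the finite-dimensional algebraic group $\mathrm{PGL}(\VV(\vec{\Lambda}))$, and the infinite-dimensional bookkeeping you flag disappears. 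This space is preserved because $\mathrm{Ad}$ of $\Ls_{C\setminus S}\Gc$ preserves the split copy of $\g^\Gamma_{C,S}$, again by the residue theorem.
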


We give a sketch of the proof. 

\begin{proof} One first shows that  $\Ls_S\Gc \coloneqq \prod_{P \in S} \Ls_P\Gc$ acts projectively on $\Hc(\vec{\Lambda})$ in such a way that $\Ls_{C\setminus S}\Gc$ has an induced projective action $\rho$ on the spaces of coinvariants $\VV(\vec{\Lambda})$, which is non zero by assumptions \cite{faltings:verlinde,hongkumar:2023}. Denoting by $\widehat{\Ls_{C\setminus S}\Gc}(\vec{\Lambda})$ the induced central extension of $\Ls_{C\setminus S}\Gc$, we obtain the diagram
\begin{equation} \label{eq:LGh} \xymatrix{
1  \ar[r] & \Gm \ar[r]\ar[d] & \widehat{\Ls_{C\setminus S}\Gc}(\vec{\Lambda}) \ar[r] \ar[d] & \Ls_{C\setminus S}\Gc \ar[r] \ar[d]^{\rho} & 1\\
1 \ar[r] & \Gm \ar[r] & \mathrm{GL}(\VV(\vec{\Lambda})) \ar[r] & \mathrm{PGL}(\VV(\vec{\Lambda})) \ar[r] & 1 }\end{equation}
and note that the line bundle $\Lc_{\vec{\Lambda}}$ is detected by a (necessarily unique) splitting of (the first row of) \eqref{eq:LGh}. Observe now that the derivative of $\rho$ is necessarily zero by definition of spaces of coinvariants. But now, since $\Ls_{C\setminus S}\Gc$ is ind-integral (in this section we assume that $S$ contains all the bad points, so we can use \cref{prop:ind-integral}), this implies that $\rho$ sends the whole group $\Ls_{C\setminus S}\Gc$ to the identity elements $\mathrm{Id}_{\VV(\vec{\Lambda})}$, providing in this way the wanted splitting.   
\end{proof}

Using this method---and making use of the properties of twisted conformal blocks from \cref{sec:twCBprop}---it is shown in \cite{damiolini.hong.gao:2025} that indeed $q^*\Pic(\Bun_\Gc)=\Pic^\Delta(\Gr_{\Gc,S})$ whenever the special vertex $o$ belongs to $Y_{\Gc(\DD_P^\times)}$ for every $P \in S$.

\bibliographystyle{alpha}
\bibliography{biblio}

\end{document}